\journal{}
\newcommand{\eps}{\varepsilon}
\newcommand{\set}[1]{\left\{#1\right\}}
\newcommand{\abs}[1]{\left|#1\right|}
\newcommand{\epsa}{\varepsilon_{\mathrm{a}}}
\newcommand{\epsb}{\varepsilon_{\mathrm{b}}}
\newcommand{\mua}{\mu_{\mathrm{a}}}
\newcommand{\mub}{\mu_{\mathrm{b}}}
\newcommand{\kb}{k_{\mathrm{b}}}
\newcommand{\p}{\partial}
\newcommand{\mE}{\mathbf{E}}
\newcommand{\mF}{\mathbf{F}}
\newcommand{\mU}{\mathbf{U}}
\newcommand{\mV}{\mathbf{V}}
\newcommand{\me}{\mathbf{e}}
\newcommand{\mf}{\mathbf{f}}
\newcommand{\mg}{\mathbf{g}}
\newcommand{\mr}{\mathbf{x}}
\newcommand{\mx}{\mathbf{x}}
\newcommand{\mz}{\mathbf{z}}
\newcommand{\vt}{\boldsymbol{\theta}}
\newcommand{\vv}{\boldsymbol{\vartheta}}
\newcommand{\vx}{\boldsymbol{\xi}}
\DeclareMathOperator*{\inc}{inc}
\DeclareMathOperator*{\scat}{scat}
\DeclareMathOperator*{\noise}{noise}
\DeclareMathOperator*{\tm}{TM}
\DeclareMathOperator*{\te}{TE}
\DeclareMathOperator*{\fm}{FM}
\DeclareMathOperator*{\dm}{DM}
\DeclareMathOperator*{\de}{DE}
\theoremstyle{plain}
\newtheorem{thm}{Theorem}[section]
\newtheorem{cor}[thm]{Corollary}
\newtheorem{lem}[thm]{Lemma}
\theoremstyle{remark}
\newtheorem{rem}{Remark}[section]
\newtheorem{ex}{Example}[section]
\begin{document}

\begin{frontmatter}

%% Title, authors and addresses

%% use the tnoteref command within \title for footnotes;
%% use the tnotetext command for theassociated footnote;
%% use the fnref command within \author or \address for footnotes;
%% use the fntext command for theassociated footnote;
%% use the corref command within \author for corresponding author footnotes;
%% use the cortext command for theassociated footnote;
%% use the ead command for the email address,
%% and the form \ead[url] for the home page:
%% \title{Title\tnoteref{label1}}
%% \tnotetext[label1]{}
%% \author{Name\corref{cor1}\fnref{label2}}
%% \ead{email address}
%% \ead[url]{home page}
%% \fntext[label2]{}
%% \cortext[cor1]{}
%% \address{Address\fnref{label3}}
%% \fntext[label3]{}

\title{Performance analysis of MUSIC-type imaging without diagonal elements of multi-static response matrix}
%\tableofcontents
%% use optional labels to link authors explicitly to addresses:
%% \author[label1,label2]{}
%% \address[label1]{}
%% \address[label2]{}

\author{Won-Kwang Park}
\ead{parkwk@kookmin.ac.kr}
\address{Department of Information Security, Cryptology, and Mathematics, Kookmin University, Seoul, 02707, Korea.}

\begin{abstract}
%% Text of abstract
  Generally, to apply the MUltiple SIgnal Classification (MUSIC) algorithm for the rapid imaging of small inhomogeneities, the complete elements of the multi-static response (MSR) matrix must be collected. However, in real-world applications such as microwave imaging or bistatic measurement configuration, diagonal elements of the MSR matrix are unknown. Nevertheless, it is possible to obtain imaging results using a traditional approach but theoretical reason of the applicability has not been investigated yet. In this paper, we establish mathematical structures of the imaging function of MUSIC from an MSR matrix without diagonal elements in both transverse magnetic (TM) and transverse electric (TE) polarizations. The established structures demonstrate why the shape of the location of small inhomogeneities can be retrieved via MUSIC without the diagonal elements of the MSR matrix. In addition, they reveal the intrinsic properties of imaging and the fundamental limitations. Results of numerical simulations are also provided to support the identified structures.
\end{abstract}

\begin{keyword}
MUltiple SIgnal Classification (MUSIC) \sep Helmholtz equation \sep Small Inhomogeneities \sep Multi-static response (MSR) matrix \sep Bessel function \sep Numerical simulations

%% keywords here, in the form: keyword \sep keyword

%% PACS codes here, in the form: \PACS code \sep code

%% MSC codes here, in the form: \MSC code \sep code
%% or \MSC[2008] code \sep code (2000 is the default)
\end{keyword}

\end{frontmatter}

%\linenumbers

%% main text

%% The Appendices part is started with the command \appendix;
%% appendix sections are then done as normal sections
%% \appendix

%% \section{}
%% \label{}

\section{Introduction}
Time-harmonic inverse scattering problems for the retrieval of a two-dimensional small inhomogeneities in transverse magnetic (TM) polarization (or permittivity contrast case) and transverse electric (TE) polarization (or permeability contrast case) have been considered in various researches \cite{AIL2,AIM,AK2,AMV,B8,DL,SH,ZT}. The principle of retrieving unknown targets is based on the Newton iteration method (i.e., determining the shape of the imhomogeneities), which minimizes the discrepancy function between the measured far-field patterns in the presence of true and man-made targets. Various techniques for reconstructing the shape of targets have also been developed, including the Newton or Gauss-Newton methods \cite{K3,KP2,W2}, level-set strategy \cite{ADIM,DL,PL4,VXB}, factorization method \cite{KG,LLP,P-FAC1}, potential drop method \cite{IYM}, inverse Fourier transform \cite{AS1}, subspace migration \cite{AGKPS,P-SUB11,P-SUB16}, topological derivative \cite{B1,P-TD1,P-TD3}, direct sampling method \cite{IJZ1,KLP1,KLP3}, and linear sampling method \cite{C,CC,KR}.

The MUltiple SIgnal Classification (MUSIC) algorithm has been successfully used for imaging arbitrary shaped targets. For example, identification of two- and three-dimensional small targets \cite{AILP,CA}, retrieving small targets completely embedded in a half-space \cite{AIL1,IGLP,G2,SCC}, detecting internal corrosion \cite{AKKLV}, damage diagnosis on complex aircraft structures \cite{BYG}, reconstruction of thin inhomogeneities or perfectly conducting cracks \cite{AKLP,P-MUSIC1,PL1,PL3}, imaging of extended targets \cite{AGKLS,HSZ1,LH2,MGS}, radar imaging \cite{OBP}, and biomedical imaging \cite{RSAAP,RSCGBA,S2}. We also refer to \cite{CA,CZ,F3,HLD,K1,SCC,SNPM,ZC} for various application of MUSIC algorithm. Throughout various researches, it has been confirmed that MUSIC is a fast, stable, and effective imaging technique. Furthermore, MUSIC can be extended in a straightforward fashion to the case of multiple non-overlapping inhomogeneities. Recently, by establishing relationships with Bessel functions of integer order, various intrinsic properties of MUSIC in both full- and limited-view and aperture inverse scattering problems have been revealed \cite{AJP,JKP,KP3,P-MUSIC1,P-MUSIC7,P-MUSIC8}.

In several studies, the MUSIC algorithm has been applied when one can use the complete elements of a multi-static response (MSR) matrix whose elements are measured scattered field or far-field pattern. However, under certain configurations, the diagonal elements of an MSR matrix cannot be handled. For example, it is very hard to simultaneously transmit and receive the signal in microwave imaging (see \cite{P-SUB11,BS,P-SUB9,P-MUSIC6,SLP,SSKLJ} for instance) so that the assumption that all elements of the MSR matrix are available cannot be used. This is the reason of the development of bistatic imaging technique to overcome intrinsic limitation of monostatic imaging, refer to \cite{C5,CKBP,LCJ,LSZM,QTWHW}. Fortunately, the shape of inhomogeneities can still be obtained via MUSIC without diagonal elements of MSR matrix. This fact can be examined through various numerical simulation; however, the theoretical reasons for its applicability have not been investigated. This provides a stimulus for analyzing the MUSIC algorithm without the diagonal elements of an MSR matrix.

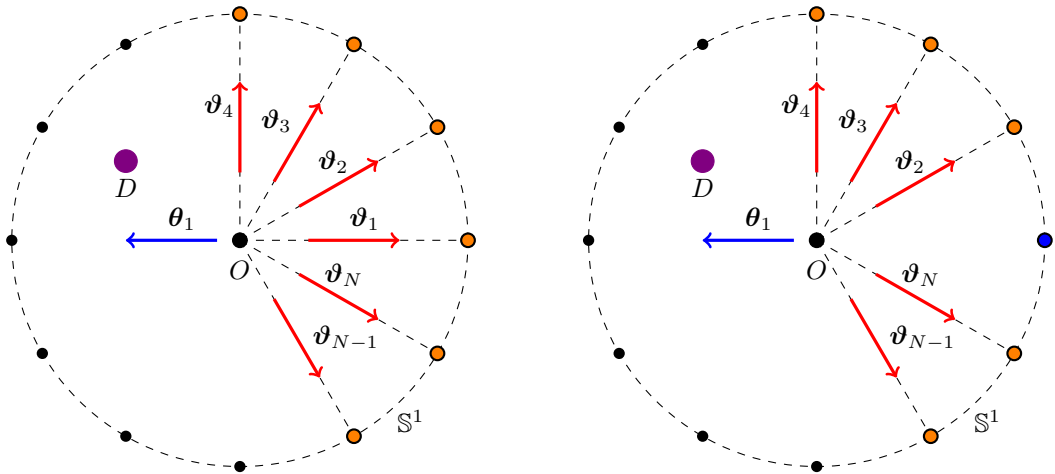
\begin{figure}[h]
\begin{center}
\begin{tikzpicture}[scale=3.0]
\draw[black,dashed] (1,0) arc (0:360:1);
\node at (0.75,-0.8) {$\mathbb{S}^1$};

\draw[black,dashed,-] (0,0) -- ({cos(0)},{sin(0)});
\draw[blue,very thick,solid,->] ({-0.1*cos(0)},{0.1*sin(0)}) -- node[above,black,xshift=4pt] {$\vt_1$} ({-0.5*cos(0)},{0.5*sin(0)});
\draw[red,very thick,solid,->] ({0.3*cos(0)},{0.3*sin(0)}) -- node[above,black,xshift=4pt] {$\vv_1$} ({0.7*cos(0)},{0.7*sin(0)});
\draw[black,thick,solid,fill=orange] ({cos(0)},{sin(0)}) circle (0.03cm);

\draw[black,dashed,-] (0,0) -- ({cos(30)},{sin(30)});
\draw[red,very thick,solid,->] ({0.3*cos(30)},{0.3*sin(30)}) -- node[above,black,xshift=-2pt] {$\vv_2$} ({0.7*cos(30)},{0.7*sin(30)});
\draw[black,thick,solid,fill=orange] ({cos(30)},{sin(30)}) circle (0.03cm);

\draw[black,dashed,-] (0,0) -- ({cos(60)},{sin(60)});
\draw[red,very thick,solid,->] ({0.3*cos(60)},{0.3*sin(60)}) -- node[above left,black,xshift=2pt] {$\vv_3$} ({0.7*cos(60)},{0.7*sin(60)});
\draw[black,thick,solid,fill=orange] ({cos(60)},{sin(60)}) circle (0.03cm);

\draw[black,dashed,-] (0,0) -- ({cos(90)},{sin(90)});
\draw[red,very thick,solid,->] ({0.3*cos(90)},{0.3*sin(90)}) -- node[above left,black,xshift=2pt] {$\vv_4$} ({0.7*cos(90)},{0.7*sin(90)});
\draw[black,thick,solid,fill=orange] ({cos(90)},{sin(90)}) circle (0.03cm);

\foreach \alpha in {120,150,...,270}
{\draw[black,thick,solid,fill=black] ({cos(\alpha)},{sin(\alpha)}) circle (0.02cm);}

\draw[black,dashed,-] (0,0) -- ({cos(300)},{sin(300)});
\draw[red,very thick,solid,->] ({0.3*cos(300)},{0.3*sin(300)}) -- node[right,black,xshift=2pt] {$\vv_{N-1}$} ({0.7*cos(300)},{0.7*sin(300)});
\draw[black,thick,solid,fill=orange] ({cos(300)},{sin(300)}) circle (0.03cm);

\draw[black,dashed,-] (0,0) -- ({cos(330)},{sin(330)});
\draw[red,very thick,solid,->] ({0.3*cos(330)},{0.3*sin(330)}) -- node[above,black,xshift=2pt] {$\vv_N$} ({0.7*cos(330)},{0.7*sin(330)});
\draw[black,thick,solid,fill=orange] ({cos(330)},{sin(330)}) circle (0.03cm);

\draw[black,thick,solid,fill=black] (0,0) circle (0.03cm) node[below,black,yshift=-4pt] {$O$};

\draw[violet,solid,fill=violet] (-0.5,0.35) circle (0.05cm) node[below,black,yshift=-3pt] {$D$};
\end{tikzpicture}\qquad\qquad
\begin{tikzpicture}[scale=3.0]
\draw[black,dashed] (1,0) arc (0:360:1);
\node at (0.75,-0.8) {$\mathbb{S}^1$};

\draw[blue,very thick,solid,->] ({-0.1*cos(0)},{0.1*sin(0)}) -- node[above,black,xshift=4pt] {$\vt_1$} ({-0.5*cos(0)},{0.5*sin(0)});

\draw[black,thick,solid,fill=blue] ({cos(0)},{sin(0)}) circle (0.03cm);

\draw[black,dashed,-] (0,0) -- ({cos(30)},{sin(30)});
\draw[red,very thick,solid,->] ({0.3*cos(30)},{0.3*sin(30)}) -- node[above,black,xshift=-2pt] {$\vv_2$} ({0.7*cos(30)},{0.7*sin(30)});
\draw[black,thick,solid,fill=orange] ({cos(30)},{sin(30)}) circle (0.03cm);

\draw[black,dashed,-] (0,0) -- ({cos(60)},{sin(60)});
\draw[red,very thick,solid,->] ({0.3*cos(60)},{0.3*sin(60)}) -- node[above left,black,xshift=2pt] {$\vv_3$} ({0.7*cos(60)},{0.7*sin(60)});
\draw[black,thick,solid,fill=orange] ({cos(60)},{sin(60)}) circle (0.03cm);

\draw[black,dashed,-] (0,0) -- ({cos(90)},{sin(90)});
\draw[red,very thick,solid,->] ({0.3*cos(90)},{0.3*sin(90)}) -- node[above left,black,xshift=2pt] {$\vv_4$} ({0.7*cos(90)},{0.7*sin(90)});
\draw[black,thick,solid,fill=orange] ({cos(90)},{sin(90)}) circle (0.03cm);

\foreach \alpha in {120,150,...,270}
{\draw[black,thick,solid,fill=black] ({cos(\alpha)},{sin(\alpha)}) circle (0.02cm);}

\draw[black,dashed,-] (0,0) -- ({cos(300)},{sin(300)});
\draw[red,very thick,solid,->] ({0.3*cos(300)},{0.3*sin(300)}) -- node[right,black,xshift=2pt] {$\vv_{N-1}$} ({0.7*cos(300)},{0.7*sin(300)});
\draw[black,thick,solid,fill=orange] ({cos(300)},{sin(300)}) circle (0.03cm);

\draw[black,dashed,-] (0,0) -- ({cos(330)},{sin(330)});
\draw[red,very thick,solid,->] ({0.3*cos(330)},{0.3*sin(330)}) -- node[above,black,xshift=2pt] {$\vv_N$} ({0.7*cos(330)},{0.7*sin(330)});
\draw[black,thick,solid,fill=orange] ({cos(330)},{sin(330)}) circle (0.03cm);

\draw[black,thick,solid,fill=black] (0,0) circle (0.03cm) node[below,black,yshift=-4pt] {$O$};

\draw[violet,solid,fill=violet] (-0.5,0.35) circle (0.05cm) node[below,black,yshift=-3pt] {$D$};

\end{tikzpicture}
\caption{\label{Configuration}Illustrations of traditional (left) and current (right) simulation configurations for the incident direction $\vt_1$.}
\end{center}
\end{figure}

In this study, we consider the MUSIC algorithm for imaging two-dimensional small inhomogeneities in TM and TE polarization from MSR matrix when the diagonal elements are cannot be handled. In order to show the feasibility, we carefully investigate the mathematical structure of a MUSIC-type imaging function by identifying a connection with the Bessel function of integer order of the first kind. This is based on the physical factorization of an MSR matrix in the presence of small inhomogeneities in TM and TE polarizations, refer to \cite{HSZ1}. The investigated structure explains why the location of inhomogeneities can be obtained via MUSIC in both TM and TE polarizations, and it reveals the undiscovered properties of MUSIC. In order to support the theoretical results, simulation results with synthetic data polluted by random noise are exhibited.

The paper is organized as follows. In Section \ref{sec:2}, we describe the two-dimensional direct scattering problem and introduce the far-field pattern in the presence of small inhomogeneities. In Section \ref{sec:3}, we introduce the traditional MUSIC algorithm. In Section \ref{sec:4}, we introduce the MUSIC algorithm, analyze the structure of the imaging function from the MSR matrix without diagonal elements, and discuss its properties. In Section \ref{sec:5}, we present the results of numerical simulations to support the analyzed structure of MUSIC. In Section \ref{sec:6}, we extend designed algorithm for imaging of small and extended perfectly conducting cracks and compare the imaging results according to the values of diagonal elements of MSR matrix. Finally, in Section \ref{sec:7}, we present a short conclusion including future work.

\section{Direct scattering problem and far-field pattern}\label{sec:2}
In this section, we introduce two-dimensional electromagnetic scattering in the presence of small inhomogeneity in TM and TE polarizations. For a detailed description, we recommend \cite{AK2,RC,VV} for a more detailed discussion. We assume that there exists a circle-like small inhomogeneity $D$ with radius $\alpha$ and center $\mz$, and every materials are characterized by the value of dielectric permittivity and magnetic permeability at the given angular frequency $\omega=2\pi f$. Here, $f$ denotes the ordinary frequency measured in \texttt{hertz}.

Let $\epsa$ and $\mua$ denote the value of dielectric permittivity and magnetic permeability of $D$, respectively, and we denote $\epsb$ and $\mub$ be those of $\mathbb{R}^2\backslash\overline{D}$. With this, the following piecewise constants of dielectric permittivity and magnetic permeability can be introduced;
\[\eps(\mx)=\left\{\begin{array}{ccl}
\epsa&\text{for}&\mx\in D\\
\epsb&\text{for}&\mx\in\mathbb{R}^2\backslash\overline{D}
\end{array}\right.
\quad\text{and}\quad
\mu(\mx)=\left\{\begin{array}{ccl}
\mua&\text{for}&\mx\in D\\
\mub&\text{for}&\mx\in\mathbb{R}^2\backslash\overline{D}.
\end{array}\right.\]
With this, we denote $\kb$ be the background wavenumber that satisfies $\kb^2=\omega^2\epsb\mub$.

In this paper, we consider the illumination of plane waves with the direction of propagation $\vt\in\mathbb{S}^1$:
\[u_{\inc}(\mx,\vt)=e^{i\kb \vt\cdot\mx},\]
where $\mathbb{S}^1$ is a two-dimensional unit circle centered at the origin. Then, the scattering of $u_{\inc}(\mx,\vt)$ by ${D}$ leads to the following direct scattering problem for the Helmholtz equation; let $u(\mx,\vt)$ be the time-harmonic total field; then, it satisfies
\begin{equation}\label{ForwardProblem}
\nabla\cdot\left(\frac{1}{\mu(\mx)}\nabla u(\mx,\vt)\right)+\omega^2\eps(\mx)u(\mx,\vt)=0\quad\text{for}\quad\mx\in\mathbb{R}^2\backslash\overline{{D}}
\end{equation}
with transmission conditions at the boundaries of ${D}$. We denote $u_{\scat}(\mx,\vt)=u(\mx,\vt)-u_{\inc}(\mx,\vt)$ as the scattered field, which is required to satisfy the Sommerfeld radiation condition
\[\lim_{|\mx|\to\infty}|\mx|^{\frac12}\left(\frac{\p u_{\scat}(\mx,\vt)}{\p|\mx|}-i\kb u_{\scat}(\mx,\vt)\right)=0\]
uniformly in all directions $\vv=\mx/\abs{\mx}\in\mathbb{S}^1$.
%
%\subsection{Far-field pattern formulas}
%Assume that $\eps(\mx)\ne\epsb$ and $\mu(\mx)=\mub$. Then, on the based of \cite{AK2,RC,CK}, the scattered field $u_{\scat}(\mx,\vt)$ can be expressed by the single-layer potential with unknown density function $\varphi$ as
%\begin{equation}\label{SingleLayer}
%u_{\scat}(\mx,\vt)=\int_{D}\Phi(\mx,\mz)\varphi(\mz,\vt)d\mz,
%\end{equation}
%where $\Phi(\mx,\mz)$ denotes the two-dimensional Green's function or fundamental solution to the Helmholtz equation
%\[\Phi(\mx,\mz)=-\frac{i}{4}H_0^{(1)}(\kb|\mx-\mz|)=-\frac{i}{4}\Big(J_0(\kb|\mx-\mz|)+\frac{i}{4}Y_0(\kb|\mx-\mz|)\Big).\]
%where $J_n$ and $Y_n$ denote the Bessel and Neumann functions of order $n$, respectively, and $H_n^{(1)}$ denotes the Hankel function of order $n$ of the first kind.

Let $u_\infty(\vv,\vt)$ be the far-field pattern of the scattered field $u_{\scat}(\mr,\vt)$ with observation direction $\vv\in\mathbb{S}^1$ that satisfies
\begin{equation}\label{FarFieldPattern}
u_{\scat}(\mr,\vt)=\frac{e^{i\kb |\mr|}}{\sqrt{|\mr|}}u_\infty(\vv,\vt)+O\left(\frac{1}{\sqrt{|\mr|}}\right)\quad\text{uniformly in all directions}\quad\vv=\frac{\mr}{|\mr|},\quad |\mr|\longrightarrow\infty.
\end{equation}
Then, by virtue of \cite{AK2}, the far-field pattern $u_\infty(\vv,\vt)$ can be represented as an asymptotic expansion formula, which plays a key role in designing the MUSIC algorithm.

\begin{lem}[Asymptotic Expansion Formula]
 For sufficiently large $\omega$, $u_\infty(\vv,\vt)$ can be represented as 
 \begin{equation}\label{AsymptoticFormula}
 u_\infty(\vv,\vt)=\alpha^2\pi\frac{\kb^2(1+i)}{4\sqrt{\kb\pi}}\left(\frac{\epsa-\epsb}{\sqrt{\epsb\mub}}-\frac{2\mub}{\mua+\mub}(\vv\cdot\vt))\right)e^{-i\kb(\vv-\vt)\cdot\mz}+o(\alpha^2).
 \end{equation}
\end{lem}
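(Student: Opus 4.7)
The plan is to derive \eqref{AsymptoticFormula} by combining three classical ingredients: the Lippmann--Schwinger integral representation of the scattered field, the large-argument asymptotics of the two-dimensional fundamental solution of the Helmholtz equation, and the small-volume asymptotic formula for the interior field on the disk $D$ expressed through its polarization tensor.

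First, I would rewrite \eqref{ForwardProblem} in divergence form and use the fundamental solution $\Phi(\mx,\mathbf{y}) = \frac{i}{4} H_0^{(1)}(\kb|\mx-\mathbf{y}|)$ of $(\Delta + \kb^2)$ in $\mathbb{R}^2$ to derive an integral representation of $u_{\scat}$. After subtracting the background equation, the scattered field admits a two-term volume representation
\begin{equation*}
u_{\scat}(\mr,\vt) = \kb^2\int_D \frac{\epsa-\epsb}{\epsb}\,\Phi(\mr,\mathbf{y})\,u(\mathbf{y},\vt)\,d\mathbf{y}
+ \int_D \frac{\mua-\mub}{\mua}\nabla_\mathbf{y}\Phi(\mr,\mathbf{y})\cdot\nabla u(\mathbf{y},\vt)\,d\mathbf{y},
\end{equation*}
which is the standard starting point (see \cite{AK2}) and cleanly separates the TM (permittivity) and TE (permeability) contributions.

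Next, I would extract $u_\infty(\vv,\vt)$ by inserting the large $|\mr|$ asymptotics of the Hankel function, $H_0^{(1)}(\kb|\mr-\mathbf{y}|) \sim \sqrt{\tfrac{2}{\pi \kb|\mr|}}\,e^{i(\kb|\mr|-\pi/4)}e^{-i\kb\vv\cdot\mathbf{y}}$ with $\vv=\mr/|\mr|$, into the integral representation and reading off the amplitude in \eqref{FarFieldPattern}. Using $e^{-i\pi/4}=(1-i)/\sqrt{2}$ one obtains the universal prefactor $\frac{\kb^2(1+i)}{4\sqrt{\kb\pi}}$ appearing in \eqref{AsymptoticFormula}, together with integrands proportional to $e^{-i\kb\vv\cdot\mathbf{y}}u(\mathbf{y},\vt)$ and $(-i\kb\vv)e^{-i\kb\vv\cdot\mathbf{y}}\cdot\nabla u(\mathbf{y},\vt)$.

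The final and main step is the small-volume expansion as $\alpha\to 0$. Rescaling $\mathbf{y}=\mz+\alpha\mathbf{s}$ concentrates both integrals on the fixed unit disk, produces the factor $\alpha^2$, and converts the exterior exponentials into the single common factor $e^{-i\kb(\vv-\vt)\cdot\mz}$ once the incident field $u_{\inc}(\mathbf{y},\vt)=e^{i\kb\vt\cdot\mathbf{y}}$ is taken into account. At leading order, $u(\mathbf{y},\vt)\to e^{i\kb\vt\cdot\mz}$ inside $D$ for the permittivity term, which yields the contribution $\pi\alpha^2(\epsa-\epsb)$ after integrating over the unit disk; dividing by $\sqrt{\epsb\mub}$ (which absorbs a $\kb$ through $\kb=\omega\sqrt{\epsb\mub}$) gives the first term in the brackets. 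The permeability term requires the transmission-problem solution: the gradient of the interior field converges to the constant vector $\frac{2\mub}{\mua+\mub}\,i\kb\vt\, e^{i\kb\vt\cdot\mz}$, which is exactly the polarization tensor of a disk in two dimensions. Pairing this with $-i\kb\vv$ from the gradient of $\Phi$ produces the $-\frac{2\mub}{\mua+\mub}(\vv\cdot\vt)$ term. All higher-order contributions vanish as $o(\alpha^2)$.

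The most delicate point, and hence the main obstacle, is justifying the leading-order expansion of $\nabla u$ inside $D$ in the TE term: this rests on the polarization tensor analysis of the interior transmission problem on a small disk and the uniform-in-$\vt$ control of the remainder. Once this classical disk polarization computation is invoked (and the prefactor bookkeeping from $\Phi$'s large-argument expansion is done carefully), assembling the permittivity and permeability pieces yields \eqref{AsymptoticFormula} exactly.
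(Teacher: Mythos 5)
The paper does not actually prove this lemma; it is quoted verbatim from the cited reference \cite{AK2}, so there is no in-paper argument to compare against. Your route --- Lippmann--Schwinger volume representation, large-argument Hankel asymptotics to read off the far-field amplitude, then a small-volume expansion with the disk polarization tensor --- is exactly the standard derivation behind the cited formula, and your bookkeeping of the universal prefactor $\frac{i}{4}\sqrt{2/(\pi\kb)}\,e^{-i\pi/4}=\frac{1+i}{4\sqrt{\kb\pi}}$ is correct.

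There is, however, a concrete slip in the TE step. For the operator $\nabla\cdot(\mu^{-1}\nabla u)$ the effective ``conductivity'' is $1/\mu$, so the transmission condition is $\mua^{-1}\partial_\nu u|_-=\mub^{-1}\partial_\nu u|_+$, and the leading-order interior gradient of a disk in an applied uniform gradient $E_0$ is $\frac{2\mua}{\mua+\mub}E_0$, not $\frac{2\mub}{\mua+\mub}E_0$ as you assert. Combining this with the factor $\frac{\mua-\mub}{\mua}$ already present in your integral representation yields the coefficient $\frac{2(\mua-\mub)}{\mua+\mub}$, and pairing $(-i\kb\vv)$ with $(i\kb\vt)$ then gives a TE term proportional to $\frac{2(\mub-\mua)}{\mua+\mub}(\vv\cdot\vt)$ --- the contrast must appear in the numerator, since the term has to vanish when $\mua=\mub$. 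Your two stated factors do not multiply out to the coefficient you claim, so the final assembly is asserted rather than verified. (For what it is worth, the coefficient $\frac{2\mub}{\mua+\mub}$ printed in the lemma itself appears to have dropped the $-\mua$ from the numerator of the standard Ammari--Kang disk polarization tensor $\frac{2(\mub-\mua)}{\mub+\mua}$, so neither your constant nor the paper's matches the correct one; but the fix is to carry the contrast through the disk transmission problem carefully, as above.)
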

\section{Traditional MUSIC Algorithm}\label{sec:3}
In this section, we introduce the traditional MUSIC algorithm for imaging ${D}$ in dielectric permittivity (or TM polarization) and magnetic permeability (or TE polarization) cases. For the sake of simplicity, suppose that we have $N-$different number of incident and observation directions $\vt_n$ and $\vv_m$, respectively, for $n,m=1,2,\cdots,N$, and that the incident and observation directions are the same (i.e., $\vv_n=-\vt_n$). In this paper, we consider the full-view inverse problem. Therefore, we assume that $\vt_n$ is uniformly distributed in $\mathbb{S}^1$ such that
\begin{equation}\label{VectorTheta}
\vt_n=\left(\cos\frac{2\pi n}{N},\sin\frac{2\pi n}{N}\right).
\end{equation}
%Let us mention that following to \cite[Section2]{ABC}, one can only distinguish between two locations $\mx_m$ and $\mx_{m'}$ if the value $|\mx_m-\mx_{m'}|$ is larger than the first zero of the Bessel function of order zero $J_0(\kb|\mx_m-\mx_{m'}|)$. This means that any segment of the size of the resolution is equivalent to a point, say $\mx_m$, and it can be effectively retrieved via MUSIC (see \cite[Remark 3.1]{PL3} also).

Traditionally, the following MSR matrix is used in the MUSIC algorithm:
\[\mathbb{M}=\begin{pmatrix}
u_{\infty}(\vv_1,\vt_1) & u_{\infty}(\vv_1,\vt_2) &  \cdots & u_{\infty}(\vv_1,\vt_{N-1}) & u_{\infty}(\vv_1,\vt_N)\\
u_{\infty}(\vv_2,\vt_1) & u_{\infty}(\vv_2,\vt_2) & \cdots & u_{\infty}(\vv_2,\vt_{N-1}) & u_{\infty}(\vv_2,\vt_N)\\
\vdots&\vdots&\ddots&\vdots&\vdots\\
u_{\infty}(\vv_N,\vt_1) & u_{\infty}(\vv_N,\vt_2) & \cdots & u_{\infty}(\vv_N,\vt_{N-1}) & u_{\infty}(\vv_N,\vt_N)\\
\end{pmatrix}.\]

First, let us assume that $\eps(\mx)\ne\epsb$ and $\mu(\mx)=\mub$. Based on \eqref{AsymptoticFormula}, since $u_\infty(\vv_m,\vt_n)$ can be approximated as
\[u_\infty(\vv_m,\vt_n)\approx\alpha^2\pi\frac{\kb^2(1+i)}{4\sqrt{\kb\pi}}\left(\frac{\epsa-\epsb}{\sqrt{\epsb\mub}}\right)e^{-i\kb(\vv_m-\vt_n)\cdot\mz},\]
$\mathbb{M}$ can be written as
\begin{equation}\label{MSR-TM}
\mathbb{M}=\alpha^2\pi\frac{\kb^2(1+i)}{4\sqrt{\kb\pi}}\left(\frac{\epsa-\epsb}{\sqrt{\epsb\mub}}\right)\mathbb{E}(\mz)^T\mathbb{E}(\mz),
\end{equation}
where
\begin{equation}\label{VecE-TM}
\mathbb{E}(\mz)=\bigg(e^{-i\kb\vv_1\cdot\mz},e^{-i\kb\vv_2\cdot\mz},\cdots,e^{-i\kb\vv_N\cdot\mz}\bigg)\bigg|_{\vv_m=-\vt_m} =\bigg(e^{i\kb\vt_1\cdot\mz},e^{i\kb\vt_2\cdot\mz},\cdots,e^{i\kb\vt_N\cdot\mz}\bigg).
\end{equation}
Based on the factorization of the MSR matrix, the range of $\mathbb{M}$ is determined by the span of $\mathbb{E}(\mz)$ corresponding to ${D}$; that is, we can define a signal subspace using a set of singular vectors corresponding to the nonzero singular values of $\mathbb{M}$.

Now, to introduce the imaging function of MUSIC, let us perform the singular value decomposition (SVD) of the MSR matrix $\mathbb{M}$:
\[\mathbb{M}=\sum_{n=1}^{N}\sigma_n\mE_n\mF_n^*\approx\sigma_1\mE_1\mF_1^*,\]
where superscript $*$ is the mark of Hermitian, $\mE_n$ and $\mF_n\in\mathbb{C}^{N\times 1}$ are respectively the left- and right-singular vectors of $\mathbb{M}$, and $\sigma_n$ denotes singular values that satisfy
\[\sigma_1>0\quad\text{and}\quad\sigma_n\approx0\quad\text{for}\quad n\geq 2.\]
Then, $\set{\mE_1}$ and $\set{\mE_{2},\mE_{3},\cdots,\mE_N}$ are the (orthonormal) basis of the signal and null (or noise) space of $\mathbb{M}$, respectively. Therefore, one can define the projection operator onto the noise subspace. This projection is given explicitly by
\[\mathbb{P}_{\noise}=\mathbb{I}_N-\mE_1\mE_1^*,\]
where $\mathbb{I}_N$ denotes the $N\times N$ identity matrix. By regarding the structure of $\mathbb{E}(\mz)$ of \eqref{VecE-TM}, we introduce the following unit test vector $\mf(\mx)$: for $\mx\in\Omega\subset\mathbb{R}^2$,
\begin{equation}\label{testvector-TM}
\mf(\mx)=\frac{1}{\sqrt{N}}\bigg(e^{i\kb\vt_1\cdot\mx},e^{i\kb\vt_2\cdot\mx},\ldots,e^{i\kb\vt_N\cdot\mx}\bigg)^T,
\end{equation}
where $\Omega$ denotes the region of interests (ROI). Then, there exists $N_0\in\mathbb{N}$ such that for any $N\geq N_0$, the following statement holds:
\[\mf(\mx)\in\text{Range}(\mathbb{M})\quad\text{if and only if}\quad\mx=\mz\in{D}.\]
This signifies that $\mathbb{P}_{\noise}(\mf(\mx))=0$ when $\mz\in{D}$. Thus, we can design a MUSIC-type imaging function such that
\begin{equation}\label{MUSICimaging-TM}
\mathfrak{F}_{\tm}(\mx)=\frac{1}{|\mathbb{P}_{\noise}(\mf(\mx))|}.
\end{equation}
Then, the map of $\mathfrak{F}_{\tm}(\mx)$ will have peaks of large ($+\infty$ in theory) and small amplitude at $\mx\in{D}$ and $\mx\in\Omega\backslash\overline{{D}}$, respectively.

Next, we assume that $\eps(\mx)=\epsb$ and $\mu(\mx)\ne\mub$. Based on \eqref{AsymptoticFormula}, since $u_\infty(\vv_m,\vt_n)$ can be approximated as
\begin{align*}
u_\infty(\vv_m,\vt_n)&=-\alpha^2\pi\frac{\kb^2(1+i)}{4\sqrt{\kb\pi}}\frac{2\mub}{\mua+\mub}(\vv\cdot\vt)e^{-i\kb(\vv-\vt)\cdot\mz}\\
&=-\alpha^2\pi\frac{\kb^2(1+i)}{4\sqrt{\kb\pi}}\frac{2\mub}{\mua+\mub}\left(\sum_{s=1}^{2}(\vv\cdot\me_s)(\vt\cdot\me_s)\right)e^{-i\kb(\vv-\vt)\cdot\mz},
\end{align*}
$\mathbb{M}$ can be written as
\begin{equation}\label{MSR-TE}
\mathbb{M}=\alpha^2\pi\frac{\kb^2(1+i)}{4\sqrt{\kb\pi}}\frac{2\mub}{\mua+\mub}\sum_{s=1}^{2}\mathbb{F}_s(\mz)^T\mathbb{F}_s(\mz),
\end{equation}
where
\begin{align}
\begin{aligned}\label{VecE-TE}
\mathbb{F}_s(\mz)&=\bigg((-\vv_1\cdot\me_s)e^{-i\kb\vv_1\cdot\mz},(-\vv_2\cdot\me_s)e^{-i\kb\vv_2\cdot\mz},\ldots,(-\vv_N\cdot\me_s)e^{-i\kb\vv_N\cdot\mz}\bigg)\bigg|_{\vv_m=-\vt_m}\\ &=\bigg((\vt_1\cdot\me_s)e^{i\kb\vt_1\cdot\mz},(\vt_2\cdot\me_s)e^{i\kb\vt_2\cdot\mz},\ldots,(\vt_N\cdot\me_s)e^{i\kb\vt_N\cdot\mz}\bigg).
\end{aligned}
\end{align}
Based on the factorization of the MSR matrix, the range of $\mathbb{M}$ is determined by the span of $\set{\mathbb{F}_1(\mz),\mathbb{F}_2(\mz)}$ corresponding to ${D}$. Thus, to introduce the imaging function of MUSIC, let us perform the singular value decomposition (SVD) of the MSR matrix $\mathbb{M}$:
\[\mathbb{M}=\sum_{n=1}^{N}\sigma_n\mE_n\mF_n^*\approx\sum_{n=1}^{2}\sigma_n\mE_n\mF_n^*.\]
Then, $\set{\mE_1,\mE_2}$ and $\set{\mE_{3},\mE_4,\ldots,\mE_N}$ are the (orthonormal) basis of the signal and null (or noise) space of $\mathbb{M}$, respectively. Therefore, one can define the projection operator onto the noise subspace. This projection is given explicitly by
\[\mathbb{P}_{\noise}=\mathbb{I}_N-\sum_{n=1}^{2}\mE_n\mE_n^*.\]
By regarding the structure of $\mathbb{F}_s(\mz)$ of \eqref{VecE-TE}, we introduce the following unit test vector $\mf(\mx)$: for $\mx\in\Omega$ and $\vx\in\mathbb{S}^1$,
\begin{equation}\label{testvector-TE}
\mf_\mu(\mx)=\sqrt{\frac{2}{N}}\bigg((\vt_1\cdot\vx)e^{i\kb\vt_1\cdot\mx},(\vt_2\cdot\vx)e^{i\kb\vt_2\cdot\mx},\ldots,(\vt_N\cdot\vx)e^{i\kb\vt_N\cdot\mx}\bigg)^T,
\end{equation}
where $\Omega$ denotes the region of interests (ROI). Then, there exists $N_0\in\mathbb{N}$ such that for any $N\geq N_0$, the following statement holds:
\[\mf_\mu(\mx)\in\text{Range}(\mathbb{M})\quad\text{if and only if}\quad\mx=\mz\in{D}.\]
This signifies that $\mathbb{P}_{\noise}(\mf(\mx))=0$ when $\mz\in{D}$. Thus, we can design a MUSIC-type imaging function such that
\[\mathfrak{F}_{\te}(\mx)=\frac{1}{|\mathbb{P}_{\noise}(\mf_{\mu}(\mx))|}.\]
Then, the map of $\mathfrak{F}_{\te}(\mx)$ will have peaks of large ($+\infty$ in theory) and small amplitude at $\mx\in{D}$ and $\mx\in\Omega\backslash\overline{{D}}$, respectively. A more detailed description is provided in \cite{AK2}.

\section{MUSIC algorithm without diagonal elements of MSR matrix: analysis and discussion}\label{sec:4}
Hereinafter, we assume that we have no information of $u_{\infty}(\vv_n,\vt_n)$ for $n=1,2,\cdots,N$. That is, the obtained MSR matrix must be of the following form:
\[\mathbb{K}=\begin{pmatrix}
\text{unknown} & u_{\infty}(\vv_1,\vt_2) &  \cdots & u_{\infty}(\vv_1,\vt_{N-1}) & u_{\infty}(\vv_1,\vt_N)\\
u_{\infty}(\vv_2,\vt_1) & \text{unknown} & \cdots & u_{\infty}(\vv_2,\vt_{N-1}) & u_{\infty}(\vv_2,\vt_N)\\
\vdots&\vdots&\ddots&\vdots&\vdots\\
u_{\infty}(\vv_N,\vt_1) & u_{\infty}(\vv_N,\vt_2) & \cdots & u_{\infty}(\vv_N,\vt_{N-1}) & \text{unknown}\\
\end{pmatrix}.\]
It should be noted that we have no any a priori information of the crack, and it is thus difficult to approximate the diagonal terms of the MSR matrix. Throughout this paper, we set the diagonal terms to be zero and consider the following MSR matrix:
\begin{equation}\label{MSR}
\mathbb{K}=\begin{pmatrix}
0 & u_{\infty}(\vv_1,\vt_2) &  \cdots & u_{\infty}(\vv_1,\vt_{N-1}) & u_{\infty}(\vv_1,\vt_N)\\
u_{\infty}(\vv_2,\vt_1) & 0 & \cdots & u_{\infty}(\vv_2,\vt_{N-1}) & u_{\infty}(\vv_2,\vt_N)\\
\vdots&\vdots&\ddots&\vdots&\vdots\\
u_{\infty}(\vv_N,\vt_1) & u_{\infty}(\vv_N,\vt_2) & \cdots & u_{\infty}(\vv_N,\vt_{N-1}) & 0\\
\end{pmatrix}.
\end{equation}
The remaining part of the algorithm is identical to the traditional one. For TM case, since the SVD of $\mathbb{K}$ can be written as
\begin{equation}\label{SVD}
  \mathbb{K}=\sum_{n=1}^{N}\sigma_n\mU_n\mV_n^*\approx\sigma_1\mU_1\mV_1^*,
  \end{equation}
we can define the projection operator onto the noise subspace
\begin{equation}\label{projection}
\mathbb{P}_{\noise}=\mathbb{I}_N-\mU_1\mU_1^*,
\end{equation}
and introduce the MUSIC-type imaging function $\mathfrak{F}_{\dm}$,\begin{equation}\label{MUSICimaging}
  \mathfrak{F}_{\dm}(\mz)=\frac{1}{|\mathbb{P}_{\noise}(\mf_\eps(\mx))|},
\end{equation}
where $\mf_{\eps}(\mx)$ is defined in \eqref{testvector-TM}. Then surprisingly, the location of $\mz\in{D}$ can be identified through the map of $\mathfrak{F}_{\dm}(\mx)$ when the total number of incident/observation directions $N$ is sufficiently large.

\begin{rem}
Although, the diagonal elements of the $\mathbb{K}$ are missing, total number of nonzero singular values is same as the total number of cracks but structure of singular values is quietly different from the ones of $\mathbb{M}$. This fact has been examined heuristically in \cite{P-SUB11}.
\end{rem}

For TE case, since the SVD of $\mathbb{K}$ can be written as
\begin{equation}\label{SVD}
  \mathbb{K}=\sum_{n=1}^{N}\sigma_n\mU_n\mV_n^*\approx\sum_{n=1}^{2}\sigma_n\mU_n\mV_n^*,
  \end{equation}
we can define the projection operator onto the noise subspace
\begin{equation}\label{projection}
\mathbb{P}_{\noise}=\mathbb{I}_N-\sum_{n=1}^{2}\mU_n\mU_n^*,
\end{equation}
and corresponding imaging function of the MUSIC can be introduced% $\mathfrak{F}_{\de}^{(\eps)}$ and $\mathfrak{F}_{\de}^{(\mu)}$,
\begin{equation}\label{MUSICimaging}
  \mathfrak{F}_{\de}^{(\eps)}(\mz)=\frac{1}{|\mathbb{P}_{\noise}(\mf_\eps(\mx))|}\quad\text{and}\quad\mathfrak{F}_{\de}^{(\mu)}(\mz)=\frac{1}{|\mathbb{P}_{\noise}(\mf_\mu(\mx))|},
\end{equation}
respectively. Here, $\mf_{\mu}(\mz)$ is defined in \eqref{testvector-TE}.% Then similar to the TM case, the location of $\mz\in{D}$ can be identified through the map of $\mathfrak{F}_{\de}(\mx)$ when the total number of incident/observation directions $N$ is sufficiently large.

To confirm this applicability in both TM and TE cases, we establish mathematical structure of $\mathfrak{F}_{\dm}(\mx)$ by identifying a relationship with Bessel functions.

\subsection{Structure of the imaging function: TM case}

For proper analysis, we introduce a result derived in \cite{P-SUB3} that plays an important role in our analysis.
\begin{lem}\label{TheoremBessel-TM}
  For sufficiently large $N$, $\vt_n\in\mathbb{S}^1$ in \eqref{VectorTheta}, and $\mx\in\mathbb{R}^2$,
  \begin{equation}\label{Identity1}
  \frac{1}{N}\sum_{n=1}^{N}e^{i\kb\vt_n\cdot\mx}= J_0(\kb|\mx|),
  \end{equation}
where $J_n$ denotes the Bessel function of order $n$ of the first kind.
\end{lem}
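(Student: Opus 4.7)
The plan is to interpret the finite sum as the periodic rectangle-rule quadrature of a smooth integrand on the unit circle, and then identify its limit via the classical integral representation of $J_0$. First, I would parametrize $\mx = |\mx|(\cos\phi,\sin\phi)$ so that, using \eqref{VectorTheta},
$$\vt_n\cdot\mx = |\mx|\cos\!\left(\frac{2\pi n}{N}-\phi\right).$$
The left-hand side of \eqref{Identity1} then becomes
$$\frac{1}{N}\sum_{n=1}^N e^{i\kb|\mx|\cos(2\pi n/N - \phi)},$$
which is exactly the composite rectangle rule, with step $h=2\pi/N$, applied to the $2\pi$-periodic function $\theta \mapsto e^{i\kb|\mx|\cos(\theta-\phi)}$ on $[0,2\pi]$, divided by the period.

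Next, I would invoke the standard integral representation
$$J_0(r) = \frac{1}{2\pi}\int_0^{2\pi} e^{ir\cos\theta}\,d\theta,$$
and use translation invariance of the integral of a periodic function to obtain
$$\frac{1}{2\pi}\int_0^{2\pi} e^{i\kb|\mx|\cos(\theta-\phi)}\,d\theta = J_0(\kb|\mx|),$$
independently of $\phi$. The claimed identity \eqref{Identity1} is then precisely the statement that the rectangle-rule average approaches this integral.

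The only quantitative point --- and the reason for the clause ``for sufficiently large $N$'' --- is bounding the quadrature error. Because the integrand is entire in $\theta$ and $2\pi$-periodic, the Euler--Maclaurin formula has no boundary contributions, so the error decays faster than any polynomial in $1/N$, and in fact exponentially in $N$ for fixed $\kb|\mx|$. I do not anticipate any deeper obstacle: the main work is just recognizing the Riemann-sum/Bessel representation. The mild subtlety is that the exponential rate degrades as $\kb|\mx|$ grows, so $N$ must be chosen large enough uniformly over the region of interest $\Omega$ in which the imaging function \eqref{MUSICimaging} is subsequently evaluated.
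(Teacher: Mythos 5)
Your argument is correct and is essentially the same derivation the paper relies on (the lemma is imported from \cite{P-SUB3}, where the sum $\frac{1}{N}\sum_{n}e^{i\kb\vt_n\cdot\mx}$ is likewise identified with the uniform quadrature of $\frac{1}{2\pi}\int_0^{2\pi}e^{i\kb|\mx|\cos(\theta-\phi)}\,d\theta=J_0(\kb|\mx|)$). Your quantitative remark is also consistent with the exact form of the error: by the Jacobi--Anger expansion the discrepancy equals a sum of aliased terms involving $J_{\ell N}(\kb|\mx|)$, $\ell\neq 0$, which decays superalgebraically in $N$ for $\kb|\mx|$ bounded on $\Omega$, exactly as you claim.
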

Now, we can obtain the following results about the structure of $\mathfrak{F}_{\dm}(\mz)$.

\begin{thm}[TM polarization case]\label{Theorem-TM}
    For sufficiently large $N$ and $k$, $\mathfrak{F}_{\dm}(\mz)$ can be represented as follows:
    \begin{equation}\label{Structure-TM}
    \mathfrak{F}_{\dm}(\mx)=\left(\frac{N^2-2N+1}{N^2-2N}\right)\Big(1-J_0(\kb|\mx-\mz|)^2\Big)^{-1/2}.
    \end{equation}
\end{thm}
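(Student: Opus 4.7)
The plan is to derive the MUSIC signal projector from an exact factorisation of $\mathbb{K}\mathbb{K}^*$ and then reduce the resulting expression using the Bessel identity of Lemma~\ref{TheoremBessel-TM}. Denote by $\mathbf{e}:=\mathbb{E}(\mz)^T$ the column vector with components $\mathbf{e}_n=e^{i\kb\vt_n\cdot\mz}$, and let $c$ denote the complex scalar prefactor appearing in the TM specialisation of \eqref{AsymptoticFormula}. The subtle prefactor $(N^2-2N+1)/(N^2-2N)$ in the claim arises because, once the diagonal of $\mathbb{M}$ is lost, $\mathbb{K}$ acquires $N-1$ nontrivial subdominant singular values that spoil the clean rank-one structure the original (diagonal-inclusive) $\mathbb{M}$ enjoyed.

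First, \eqref{AsymptoticFormula} with $\vv_m=-\vt_m$ gives $\mathbb{K}_{mn}=c\,e^{i\kb(\vt_m+\vt_n)\cdot\mz}(1-\delta_{mn})$. Inside $\sum_k \mathbb{K}_{mk}\overline{\mathbb{K}_{nk}}$ the $\vt_k$-dependence cancels in every summand, so a direct entry-wise calculation produces $(N-1)|c|^2$ on the diagonal and $(N-2)|c|^2\,\mathbf{e}_m\overline{\mathbf{e}_n}$ off it; equivalently,
\[
\mathbb{K}\mathbb{K}^* \;=\; |c|^2\,\mathbb{I}_N + (N-2)|c|^2\,\mathbf{e}\,\overline{\mathbf{e}}^T.
\]
Because $\mathbf{e}\overline{\mathbf{e}}^T\mathbf{e}=N\mathbf{e}$, the dominant eigenvalue is $(N-1)^2|c|^2$, so $\sigma_1=(N-1)|c|$ with $U_1\propto\mathbf{e}/\sqrt{N}$, while the remaining $N-1$ singular values all equal $|c|$.

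Second, under the rank-one truncation $\mathbb{K}\approx\sigma_1 U_1 V_1^*$ one identifies $U_1U_1^*$ with $\mathbb{K}\mathbb{K}^*/\sigma_1^2=\bigl(\mathbb{I}_N+(N-2)\,\mathbf{e}\overline{\mathbf{e}}^T\bigr)/(N-1)^2$. Applying $\mathbb{P}_{\noise}=\mathbb{I}_N-U_1U_1^*$ to the test vector $\mf_{\eps}(\mx)$ from \eqref{testvector-TM}, and invoking Lemma~\ref{TheoremBessel-TM} to write $\overline{\mathbf{e}}^T\mf_{\eps}(\mx)=\sqrt{N}\,J_0(\kb|\mx-\mz|)$ together with the identity $\mathbf{e}=\sqrt{N}\,\mf_{\eps}(\mz)$, this action collapses to
\[
\mathbb{P}_{\noise}\mf_{\eps}(\mx)\;=\;\frac{N(N-2)}{(N-1)^2}\bigl(\mf_{\eps}(\mx)-J_0(\kb|\mx-\mz|)\,\mf_{\eps}(\mz)\bigr).
\]
Taking the Euclidean norm and using $\|\mf_{\eps}(\mx)\|=\|\mf_{\eps}(\mz)\|=1$ with $\mf_{\eps}(\mx)^*\mf_{\eps}(\mz)=J_0(\kb|\mx-\mz|)$ (once more by Lemma~\ref{TheoremBessel-TM}) gives $|\mathbb{P}_{\noise}\mf_{\eps}(\mx)|=\frac{N(N-2)}{(N-1)^2}\sqrt{1-J_0(\kb|\mx-\mz|)^2}$, and reciprocating yields the claim.

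The main obstacle is the rank-one identification $U_1U_1^*\simeq\mathbb{K}\mathbb{K}^*/\sigma_1^2$ in the second step, which is exact only when $\mathbb{K}$ is genuinely rank one; here $\mathbb{K}$ carries an entire $(N-1)$-fold block of nonzero subdominant singular values of magnitude $|c|$, and the scalar $N(N-2)/(N-1)^2=1-1/(N-1)^2$ appearing in front of the projector's image encodes exactly this ``signal leakage'' caused by the missing diagonal entries. Its reciprocal is precisely the theorem's prefactor $(N^2-2N+1)/(N^2-2N)$, and the hypothesis of large $N$ and $\kb$ is needed to validate both the rank-one truncation (so that the leakage is small) and the Bessel identity of Lemma~\ref{TheoremBessel-TM}.
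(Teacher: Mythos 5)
Your proposal is correct and follows essentially the same route as the paper: the same entry-wise computation giving $\mathbb{KK}^*=|c|^2\bigl(\mathbb{I}_N+(N-2)\mathbf{e}\overline{\mathbf{e}}^T\bigr)$, the same identification $\mU_1\mU_1^*\approx\mathbb{KK}^*/\sigma_1^2$, and the same use of Lemma~\ref{TheoremBessel-TM} to reduce everything to $J_0(\kb|\mx-\mz|)$. The only (welcome) difference is that you pin down $\sigma_1=(N-1)|c|$ directly from the explicit spectrum of the rank-one perturbation, whereas the paper fixes the constant $C_\eps=1/(N-1)^2$ a posteriori by imposing $|\mathbb{P}_{\noise}(\mf_\eps(\mz))|=0$.
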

\begin{proof}
Based on \eqref{MSR-TM}, \eqref{MSR}, and \eqref{SVD}, $\mathbb{K}$ can be written
\[\mathbb{K}\approx\frac{\alpha^2\kb^2(1+i)(\epsa-\epsb)\pi}{4\sqrt{\kb\pi\epsb\mub}}\begin{pmatrix}
\medskip 0 & e^{i\kb(\vt_1+\vt_2)\cdot\mz} & \cdots & e^{i\kb(\vt_1+\vt_N)\cdot\mz}\\
e^{i\kb(\vt_2+\vt_1)\cdot\mz} & 0 & \cdots & e^{i\kb(\vt_2+\vt_N)\cdot\mz}\\
\medskip\vdots&\vdots&\ddots&\vdots\\
e^{i\kb(\vt_N+\vt_1)\cdot\mz} & e^{i\kb(\vt_N+\vt_2)\cdot\mz} & \cdots & 0\\
\end{pmatrix}.\]
Then, performing an elementary calculus yields
\[\frac{1}{(\sigma_1)^2}\mathbb{KK}^*=C_\eps
\begin{pmatrix}
N-1&(N-2)e^{i\kb(\vt_1-\vt_2)\cdot\mz}&\cdots&(N-2)e^{i\kb(\vt_1-\vt_N)\cdot\mz}\\
(N-2)e^{i\kb(\vt_2-\vt_1)\cdot\mz}&N-1&\cdots&(N-2)e^{i\kb(\vt_2-\vt_N)\cdot\mz}\\
\vdots&\vdots&\ddots&\vdots\\
(N-2)e^{i\kb(\vt_N-\vt_1)\cdot\mz}&(N-2)e^{i\kb(\vt_N-\vt_2)\cdot\mz}&\cdots&N-1
\end{pmatrix},\]
where
\[C_\eps=\left(\frac{\alpha^2\kb^2(\epsa-\epsb)\pi}{2\sigma_1\sqrt{\kb\pi\epsb\mub}}\right)^2.\]
Then, we have
\[\mathbb{I}-\mU_1\mU_1^*=\mathbb{I}-\frac{1}{(\sigma_1)^2}\mathbb{KK}^*=(1-C_\eps)\mathbb{I}-C_\eps(N-2)\begin{pmatrix}
e^{i\kb(\vt_1-\vt_1)\cdot\mz}&e^{i\kb(\vt_1-\vt_2)\cdot\mz}&\cdots&e^{i\kb(\vt_1-\vt_N)\cdot\mz}\\
e^{i\kb(\vt_2-\vt_1)\cdot\mz}&e^{i\kb(\vt_2-\vt_2)\cdot\mz}&\cdots&e^{i\kb(\vt_2-\vt_N)\cdot\mz}\\
\vdots&\vdots&\ddots&\vdots\\
e^{i\kb(\vt_N-\vt_1)\cdot\mz}&e^{i\kb(\vt_N-\vt_2)\cdot\mz}&\cdots&e^{i\kb(\vt_N-\vt_N)\cdot\mz}
\end{pmatrix}.\]

Based on \eqref{Identity1}, since
\begin{equation}\label{JacobiAnger}
e^{i\kb\vt_p\cdot\mz}\sum_{n=1}^{N}e^{i\kb\vt_n\cdot(\mx-\mz)}=Ne^{i\kb\vt_p\cdot\mz}J_0(\kb|\mx-\mz|)
\end{equation}
for $p=1,2,\cdots,N$, we have
\begin{align*}
\begin{pmatrix}
e^{i\kb(\vt_1-\vt_1)\cdot\mz}&e^{i\kb(\vt_1-\vt_2)\cdot\mz}&\cdots&e^{i\kb(\vt_1-\vt_N)\cdot\mz}\\
e^{i\kb(\vt_2-\vt_1)\cdot\mz}&e^{i\kb(\vt_2-\vt_2)\cdot\mz}&\cdots&e^{i\kb(\vt_2-\vt_N)\cdot\mz}\\
\vdots&\vdots&\ddots&\vdots\\
e^{i\kb(\vt_N-\vt_1)\cdot\mz}&e^{i\kb(\vt_N-\vt_2)\cdot\mz}&\cdots&e^{i\kb(\vt_N-\vt_N)\cdot\mz}
\end{pmatrix}\begin{pmatrix}
e^{i\kb\vt_1\cdot\mx}\\
e^{i\kb\vt_2\cdot\mx}\\
\vdots\\
e^{i\kb\vt_N\cdot\mx}\\
\end{pmatrix}
=\begin{pmatrix}
\displaystyle Ne^{i\kb\vt_1\cdot\mz}J_0(\kb|\mx-\mz|)\\
\displaystyle Ne^{i\kb\vt_2\cdot\mz}J_0(\kb|\mx-\mz|)\\
\vdots\\
\displaystyle Ne^{i\kb\vt_N\cdot\mz}J_0(\kb|\mx-\mz|)
\end{pmatrix}
\end{align*}
and correspondingly,
\[\mathbb{P}_{\noise}(\mf(\mx))=(\mathbb{I}-\mU_1\mU_1^*)\mf(\mx)=\frac{1-C_\eps}{\sqrt{N}}\begin{pmatrix}
\medskip e^{i\kb\vt_1\cdot\mx}\\
e^{i\kb\vt_2\cdot\mx}\\
\medskip \vdots\\
e^{i\kb\vt_N\cdot\mx}\\
\end{pmatrix}
-C_\eps(N-2)\sqrt{N}\begin{pmatrix}
\medskip  e^{i\kb\vt_1\cdot\mz}J_0(\kb|\mx-\mz|)\\
 e^{i\kb\vt_2\cdot\mz}J_0(\kb|\mx-\mz|)\\
\medskip \vdots\\
 e^{i\kb\vt_N\cdot\mz}J_0(\kb|\mx-\mz|)
\end{pmatrix}.\]

Now, let us write
\[|\mathbb{P}_{\noise}(\mf(\mx))|=\left(\mathbb{P}_{\noise}(\mf(\mx))\cdot\overline{\mathbb{P}_{\noise}(\mf(\mx))}\right)^{1/2}=\left(\sum_{n=1}^{N}\bigg(\frac{(1-C_\eps)^2}{N}-(\Psi_1+\overline{\Psi}_1)+\Psi_2\overline\Psi_2\bigg)\right)^{1/2},\]
where
\begin{align*}
\Psi_1&=(1-C_\eps)C_\eps(N-2)e^{i\kb\vt_n\cdot(\mx-\mz)}J_0(\kb|\mx-\mz|)\\
\Psi_2&=C_\eps(N-2)\sqrt{N}e^{i\kb\vt_n\cdot\mz}J_0(\kb|\mx-\mz|).
\end{align*}
Applying \eqref{JacobiAnger} again, we can obtain
\[\sum_{n=1}^{N}(\Psi_1+\overline{\Psi}_1)=2C_\eps(1-C_\eps)(N-2)NJ_0(\kb|\mx-\mz|)^2\]
and
\[\sum_{n=1}^{N}\Psi_2\overline\Psi_2=C_\eps^2(N-2)^2N^2J_0(\kb|\mx-\mz|)^2,\]
$|\mathbb{P}_{\noise}(\mf_\eps(\mx))|$ becomes
\[|\mathbb{P}_{\noise}(\mf_\eps(\mx))|=\Big((1-C_\eps)^2-2(1-C_\eps)C_\eps(N-2)NJ_0(\kb|\mx-\mz|)^2+C_\eps^2(N-2)^2N^2J_0(\kb|\mx-\mz|)^2\Big)^{1/2}.\]
Since $|\mathbb{P}_{\noise}(\mf_\eps(\mz))|=0$ and $J_0(0)=1$,
\[(1-C_\eps)^2-2(1-C_\eps)C_\eps(N-2)N+C_\eps^2(N-2)^2N^2=\Big((1-C_\eps)-C_\eps N(N-2)\Big)^2=0\quad\text{implies}\quad C_\eps=\frac{1}{(N-1)^2}.\]
Therefore,
\[|\mathbb{P}_{\noise}(\mf(\mx))|=(1-C_\eps)\Big(1-J_0(\kb|\mx-\mz|)^2\Big)^{1/2}=\left(\frac{N^2-2N}{N^2-2N+1}\right)\Big(1-J_0(\kb|\mx-\mz|)^2\Big)^{1/2}.\]
Hence, we can derive the \eqref{Structure-TM}.
\end{proof}

\begin{rem}\label{Remark-TM}Based on the identified structure \eqref{Structure-TM}, we can observe several properties of MUSIC.
\begin{enumerate}
\item Since $J_0(0)=1$, the map of $\mathfrak{F}_{\dm}(\mx)$ will contain peak of large magnitude (theoretically $+\infty$) at $\mx=\mz\in{D}$. This explains why MUSIC is applicable for imaging or identifying cracks without diagonal elements of an MSR matrix.
\item As in traditional MUSIC-type imaging, the resolution of the imaging result is highly dependent on the values of $k$ and $N$. This signifies that because $\mathfrak{F}_{\dm}(\mx)$ is related to $J_0$, a result of poor resolution will obtained if $k$ is small. In contrast, if $k$ is sufficiently large (as we assume in Theorem \ref{Theorem-TM}), a result of high resolution can be obtained even in the presence of various artifacts. 
\item Based on recent work \cite[Section 3.5]{P-MUSIC1}, $f_{\fm}(\mx)$ can be represented as
\[f_{\fm}(\mx)=\Big(1-J_0(\kb|\mx-\mz|)^2\Big)^{-1/2}.\]
Since
%\[\lim_{N\to\infty}\frac{N^2-2N}{N^2-2N+1}=1,\]
%we can examine that
\[\lim_{N\to\infty}\mathfrak{F}_{\dm}(\mx)=\lim_{N\to\infty}\left(\frac{N^2-2N+1}{N^2-2N}\right)\Big(1-J_0(\kb|\mx-\mz|)^2\Big)^{-1/2}=\Big(1-J_0(\kb|\mx-\mz|)^2\Big)^{-1/2}=f_{\fm}(\mx),\]
we can examine that several incident/observation directions should be applied to increase the resolution of the imaging results, i.e, a result of poor resolution will appear if $N$ is small.
\end{enumerate}
\end{rem}

Based on the Remark \ref{Remark-TM}, we can also examine the unique determination.
\begin{cor}[Unique determination of inhomogeneity]\label{CorUnique}
For sufficiently large $N$ and $k$, the location of small inhomogeneity can be identified uniquely via the map of $\mathfrak{F}_{\dm}(\mx)$.
\end{cor}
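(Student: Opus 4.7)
The plan is to read uniqueness directly off the closed-form representation established in Theorem~\ref{Theorem-TM}. Up to the positive prefactor $(N^2-2N+1)/(N^2-2N)$, which is finite and bounded away from $0$ and $\infty$ for every $N\geq 3$, the map $\mathfrak{F}_{\dm}(\mx)$ coincides with $(1-J_0(\kb|\mx-\mz|)^2)^{-1/2}$, so showing that $\mz$ is uniquely recoverable reduces to showing that the scalar equation $J_0(\kb|\mx-\mz|)^2=1$ admits the unique solution $\mx=\mz$ on the ROI $\Omega$.

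My first step is to record that $J_0(0)=1$, which already gives $\mathfrak{F}_{\dm}(\mz)=+\infty$ and places a theoretical pole of the map exactly at the true location. For the converse direction I would invoke the classical strict bound $|J_0(t)|<1$ for every $t>0$, which follows from the integral representation
\[J_0(t)=\frac{1}{\pi}\int_0^{\pi}\cos(t\sin\tau)\,d\tau\]
together with the fact that $\cos(t\sin\tau)<1$ on a subset of $(0,\pi)$ of positive Lebesgue measure whenever $t>0$. Since $\kb>0$, the argument $\kb|\mx-\mz|$ vanishes only at $\mx=\mz$, so $1-J_0(\kb|\mx-\mz|)^2>0$ and $\mathfrak{F}_{\dm}(\mx)$ remains finite throughout $\Omega\backslash\{\mz\}$; combined with the blow-up at $\mz$, this identifies $\mz$ as the unique global maximizer of the map.

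The subtle point, and what I would flag as the main obstacle, is that \eqref{Structure-TM} is an asymptotic identity rather than an exact equality: it neglects the $o(\alpha^2)$ residue in the expansion \eqref{AsymptoticFormula}, the small singular values discarded in the approximation \eqref{SVD}, and the higher-order error in the Bessel identity of Lemma~\ref{TheoremBessel-TM}. To upgrade the heuristic pole-at-$\mz$ statement into a genuine unique-determination result, one must check that these vanishing perturbations do not shift the global maximum. Because $1-J_0(\kb|\mx-\mz|)^2$ is bounded uniformly away from $0$ on every compact subset of $\Omega\backslash\{\mz\}$ once $\kb$ is sufficiently large, a standard continuity argument absorbs the perturbations: for $N$ and $k$ above appropriate thresholds the perturbed imaging function continues to attain a strict global maximum at $\mx=\mz$, which yields the stated corollary.
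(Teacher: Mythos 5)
Your argument is correct and follows essentially the same route as the paper, which deduces the corollary directly from the closed form \eqref{Structure-TM} together with the facts that $J_0(0)=1$ (pole at $\mx=\mz$) and $|J_0(t)|<1$ for $t>0$ (no pole elsewhere). Your additional remark about absorbing the $o(\alpha^2)$, truncated-SVD, and Bessel-identity errors by a continuity argument is a reasonable strengthening, but it does not change the underlying approach.
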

%
%On the basis of the result in Theorem \ref{TheoremBessel-TM}, an extension to the identification of multiple inhomogeneities seems possible.
%
%\begin{cor}[Multiple inhomogeneities]\label{CorMultiple-TM}
%Assume that there exists well-separated multiple inhomogeneities ${D}=\mz_m+\alpha_m\mB_m$, $m=1,2,\cdots,M$. Then, for sufficiently large $N$ and $k$, $\mathfrak{F}_{\dm}(\mx)$ can be represented as follows:
%    \begin{multline}\label{StructureMultiple-TM}
%    \mathfrak{F}_{\dm}(\mx)=\bigg|1-\sum_{m=1}^{M}\bigg[(C_{\eps,m}+\overline{C}_{\eps,m})\Big(NJ_0(\kb|\mx-\mz_m|)^2-1\Big)\\
%    +(C_{\eps,m})^2\Big(N^2J_0(\kb|\mx-\mz_m|)^2-2NJ_0(\kb|\mx-\mz_m|)^2+1\Big)\bigg]\bigg|^{-1/2},
%    \end{multline}
%    where
%    \[C_{\eps,m}=\frac{\alpha_m^2\kb^2|\mB_m|(1+i)(\epsa-\epsb)}{4{D}\sqrt{\kb\pi\epsb\mub}}\]
%    and ${D}$ denotes the $m$th nonzero singular value of $\mathbb{K}$ that satisfies
%    \[{D}\approx\frac{\alpha_m^2\kb^2|\mB_m|(1+i)(\epsa-\epsb)(N-1)}{4\sqrt{\kb\pi\epsb\mub}}\quad\text{as}\quad N\longrightarrow+\infty.\]
%\end{cor}

\subsection{Structure of the imaging function: TE case}
Now, we analyze the imaging function in TE polarization. We first recall a useful result derived in \cite{P-SUB3,P-SUB8}.%and consider the selection of test vector $\mf(\mx)$ for TE polarization.

\begin{lem}\label{TheoremBessel-TE}
  For sufficiently large $N$, $\vt_n\in\mathbb{S}^1$ in \eqref{VectorTheta}, $\vx\in\mathbb{S}^1$, and $\mz\in\mathbb{R}^2$,
  \begin{align}
  \begin{aligned}\label{Identity2}
  &\sum_{n=1}^{N}(\vx\cdot\vt_n)e^{i\kb\vt_n\cdot\mz}=iN\left(\frac{\mz}{|\mz|}\cdot\vx\right)J_1(\kb|\mz|),\\
  &\sum_{n=1}^{N}(\vx\cdot\vt_n)^2=\frac{N}{2},\\
  &\sum_{n=1}^{N}(\vt_m\cdot\vt_n)(\vx\cdot\vt_n)e^{i\kb\vt_n\cdot\mz}=\frac{N}{2}(\vt_m\cdot\vx)\Big(J_0(\kb|\mz|)+J_2(\kb|\mz|)\Big)-N\left(\frac{\mz}{|\mz|}\cdot\vt_m\right)\left(\frac{\mz}{|\mz|}\cdot\vx\right)J_2(\kb|\mz|).
  \end{aligned}
  \end{align}
\end{lem}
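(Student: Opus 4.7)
The plan is to reduce all three identities to the Jacobi--Anger expansion $e^{ir\cos\alpha}=\sum_{k\in\mathbb{Z}} i^k J_k(r)\,e^{ik\alpha}$ combined with the discrete orthogonality $\sum_{n=1}^{N} e^{ik\theta_n}=N\delta_{k,0}$ whenever $|k|<N$, which is the same character-sum step that underlies Lemma \ref{TheoremBessel-TM}. Writing $\theta_n=2\pi n/N$, $\vx=(\cos\psi,\sin\psi)$, $\vt_m=(\cos\theta_m,\sin\theta_m)$, and $\mz=|\mz|(\cos\phi,\sin\phi)$, one has $\vt_n\cdot\mz=|\mz|\cos(\theta_n-\phi)$, hence $e^{i\kb\vt_n\cdot\mz}=\sum_{k\in\mathbb{Z}} i^k J_k(\kb|\mz|)\,e^{ik(\theta_n-\phi)}$. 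Each identity then becomes a matter of picking out which indices $k$ survive after summation in $n$.

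For the first identity I would decompose $\vx\cdot\vt_n=\tfrac{1}{2}(e^{i(\theta_n-\psi)}+e^{-i(\theta_n-\psi)})$ and, after multiplying by the Jacobi--Anger series, retain only the terms whose net coefficient of $\theta_n$ vanishes, namely $k=-1$ from the first exponential and $k=1$ from the second. Using $J_{-1}=-J_1$ and $i^{-1}=-i$, the two survivors combine to $iN\,J_1(\kb|\mz|)\cos(\phi-\psi)=iN\left(\frac{\mz}{|\mz|}\cdot\vx\right)J_1(\kb|\mz|)$. The second identity is immediate from $\cos^2(\theta_n-\psi)=\tfrac{1}{2}+\tfrac{1}{2}\cos(2\theta_n-2\psi)$ together with $\sum_n e^{\pm 2i\theta_n}=0$ once $N\geq 3$.

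For the third identity the product-to-sum step $\cos(\theta_n-\theta_m)\cos(\theta_n-\psi)=\tfrac{1}{2}\cos(\theta_m-\psi)+\tfrac{1}{2}\cos(2\theta_n-\theta_m-\psi)$ splits the $n$-sum. The constant-in-$n$ piece couples only to the $k=0$ mode and contributes $\tfrac{N}{2}(\vt_m\cdot\vx)\,J_0(\kb|\mz|)$. The second piece resonates with the $k=\pm 2$ modes; after combining them using $J_{-2}=J_2$ and $i^{\pm 2}=-1$, I obtain $-\tfrac{N}{2}\,J_2(\kb|\mz|)\cos(\theta_m+\psi-2\phi)$.

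The main obstacle is the final trigonometric rewriting that recognizes $\cos(\theta_m+\psi-2\phi)$ as a linear combination of the geometric quantities appearing in the claim. Setting $a=\theta_m-\phi$ and $b=\psi-\phi$ and invoking $\cos(a-b)-2\cos a\cos b=-\cos(a+b)$, the $J_2$ contribution rearranges into $\tfrac{N}{2}(\vt_m\cdot\vx)J_2(\kb|\mz|)-N\left(\frac{\mz}{|\mz|}\cdot\vt_m\right)\left(\frac{\mz}{|\mz|}\cdot\vx\right)J_2(\kb|\mz|)$, which, added to the $J_0$ piece, produces the stated formula. The discrete orthogonality requires $|k|<N$, so the first identity needs $N\geq 2$, the second $N\geq 3$, and the third $N\geq 5$; all are covered by the hypothesis that $N$ is sufficiently large.
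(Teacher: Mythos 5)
Your derivation is correct: all three identities follow exactly as you describe from the Jacobi--Anger expansion $e^{i\kb\vt_n\cdot\mz}=\sum_{k}i^kJ_k(\kb|\mz|)e^{ik(\theta_n-\phi)}$ together with $\sum_{n=1}^{N}e^{ik\theta_n}=N$ for $k\equiv 0\pmod N$ and $0$ otherwise, and your final trigonometric rearrangement of $-\frac{N}{2}J_2\cos(\theta_m+\psi-2\phi)$ into the two stated $J_2$ terms checks out. The paper itself offers no proof of this lemma (it is quoted from the cited prior work), but your argument is the standard one underlying those references and the companion Lemma \ref{TheoremBessel-TM}. One small imprecision: the surviving modes are all $k\equiv\mp1$ (resp.\ $\mp2$) modulo $N$, not only $k=\mp1,\mp2$, so the first and third identities are not exact for $N\geq 2$ or $N\geq 5$ but carry aliasing remainders involving $J_{N\pm1}$, $J_{N\pm2}$; it is precisely the hypothesis that $N$ is sufficiently large (with $\kb|\mz|$ bounded) that makes these negligible, and only the second identity is exact (for $N\geq 3$).
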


%\begin{rem}[Test vector selection]\label{Remark-TestVecror}
%Because the unit normal $\vn(\mz_m)$ remains unknown for each $\mz_m\in{D}$, one can apply a set of directions $\mx_l$ for $l=1,2,\cdots,L$, and select $\mx_l$ that maximizes $\mathfrak{F}_{\dm}(\mx)$ for each $\mx\in\Omega$. Based on \cite{HSZ1,PL1}, it is possible to obtain favorable results; however, this process incurs large computational costs. Consequently, we select $\mx_n$ such that $\mx_n\cdot\vt_n\equiv1$ for all $n$. That is, we adopt $\mf(\mx)$ of \eqref{testvector-TM} for imaging $\Gamma$ in TE polarization.
%\end{rem}

\begin{thm}[TE polarization case]\label{Theorem-TE}
    Let $\vx=(\cos\xi,\sin\xi)$. Then, for sufficiently large $N$ and $k$, $\mathfrak{F}_{\de}^{(\eps)}(\mx)$ and $\mathfrak{F}_{\de}^{(\mu)}(\mx)$ can be represented as follows:
    \begin{equation}\label{Structure-TE1}
    \mathfrak{F}_{\de}^{(\eps)}(\mx)=\left((1-C_\mu)^2-2C_\mu(1-C_\mu)N\left(\frac{N}{2}-2\right)J_1(\kb|\mx-\mz|)^2+C_\mu^2\frac{N^2}{2}\left(\frac{N}{2}-2\right)^2J_1(\kb|\mx-\mz|)^2\right)^{-1/2}
    \end{equation}
    and
    \begin{equation}\label{Structure-TE2}
    \mathfrak{F}_{\de}^{(\mu)}(\mx)=\left((1-C_\mu)^2-2C_\mu(1-C_\mu)N\left(\frac{N}{2}-2\right)J_1(\kb|\mx-\mz|)^2+C_\mu^2\frac{N^2}{2}\left(\frac{N}{2}-2\right)^2J_1(\kb|\mx-\mz|)^2\right)^{-1/2},
    \end{equation}
    where $\sigma_1\approx\sigma_2=\sigma$ and 
    \[C_\mu=\left(\frac{\alpha^2\kb^2\mub\pi}{\sigma(\mua+\mub)\sqrt{\kb\pi}}\right)^2.\]
\end{thm}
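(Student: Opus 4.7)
The plan is to mirror the rank-one argument used for Theorem \ref{Theorem-TM}, with the obvious rank-two modifications dictated by \eqref{MSR-TE} and \eqref{projection}. First I would substitute the asymptotic formula \eqref{AsymptoticFormula} into \eqref{MSR} and factor $\mathbb{K}\approx\Lambda\,\mathbb{T}(\mz)$, where $\Lambda:=\frac{\alpha^2\pi\kb^2(1+i)}{4\sqrt{\kb\pi}}\cdot\frac{2\mub}{\mua+\mub}$ and $\mathbb{T}(\mz)$ has entries $(\vt_m\cdot\vt_n)\,e^{i\kb(\vt_m+\vt_n)\cdot\mz}$ off the diagonal and $0$ on the diagonal; the dyadic identity $\sum_{s=1}^{2}(\vt_m\cdot\me_s)(\vt_n\cdot\me_s)=\vt_m\cdot\vt_n$ collapses the summation index $s$. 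Under $\sigma_1\approx\sigma_2=\sigma$, the signal projector satisfies $\sum_{n=1}^{2}\mU_n\mU_n^*\approx\mathbb{KK}^*/\sigma^2$, so $\mathbb{P}_{\noise}(\mf)=(\mathbb{I}_N-\mathbb{KK}^*/\sigma^2)\mf$ for each test vector $\mf\in\set{\mf_\eps,\mf_\mu}$.

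Next I would compute $\mathbb{KK}^*$ componentwise. Its diagonal is $\abs{\Lambda}^2\sum_{n\neq m}(\vt_m\cdot\vt_n)^2=\abs{\Lambda}^2(N/2-1)$ by the second identity of Lemma \ref{TheoremBessel-TE}. For $m\neq l$ the off-diagonal reads $\abs{\Lambda}^2 e^{i\kb(\vt_m-\vt_l)\cdot\mz}\sum_{n\neq m,l}(\vt_m\cdot\vt_n)(\vt_l\cdot\vt_n)$; the dyadic identity $\sum_n \vt_n\vt_n^T=(N/2)\mathbb{I}_2$ (a rewriting of the second identity) together with subtraction of the two excluded indices delivers $\abs{\Lambda}^2(N/2-2)(\vt_m\cdot\vt_l)\,e^{i\kb(\vt_m-\vt_l)\cdot\mz}$. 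Absorbing $\abs{\Lambda}^2/\sigma^2$ into $C_\mu$, the noise projector decomposes exactly as in the TM calculation, with the factor $N-2$ there replaced by $N/2-2$ and each exponential multiplied by the extra $\vt_m\cdot\vt_l$ kernel.

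To evaluate the projector on $\mf_\eps$, I would apply the first identity of Lemma \ref{TheoremBessel-TE} with $\mz\leftarrow\mx-\mz$ and $\vx\leftarrow\vt_m$ to collapse the off-diagonal sum into $iN(\hat\md\cdot\vt_m)J_1(\kb\abs{\md})$, where $\md:=\mx-\mz$. For $\mf_\mu$ the extra factor $\vt_n\cdot\vx$ forces the third identity of the same lemma, producing a $(J_0+J_2)(\kb\abs{\md})$ piece and an $\hat\md$-projected $J_2$ piece. In both cases I would then form $\abs{\mathbb{P}_{\noise}(\mf)}^2=\sum_m\abs{\,\cdot\,}^2$ and reduce the pure and cross contributions using the second identity $\sum_n(\hat\md\cdot\vt_n)^2=N/2$ together with a second application of the first identity; the Bessel recurrence $J_0(x)+J_2(x)=(2/x)J_1(x)$ then rewrites the $\mf_\mu$ contribution purely in terms of $J_1$, matching \eqref{Structure-TE1}--\eqref{Structure-TE2}.

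Finally, $C_\mu$ is pinned down by the consistency condition $\abs{\mathbb{P}_{\noise}(\mf_\mu(\mz))}=0$, which holds because $\mf_\mu(\mz)\in\operatorname{span}\set{\mathbb{F}_1(\mz),\mathbb{F}_2(\mz)}$ for an appropriate choice of $\vx$. At $\mx=\mz$ the $J_1$-terms vanish since $J_1(0)=0$, leaving a quadratic in $C_\mu$ whose positive root reproduces the displayed value. The main obstacle is the $\mf_\mu$ calculation: one must track three distinct pieces (a $J_0+J_2$ term, an $\hat\md$-projected $J_2$ term, and a residual $\vt_m\cdot\vx$ contribution), show that squaring and summing over $m$ eliminates the spurious $\vx$-dependence, and verify that the surviving shape collapses to the same $J_1^2$-profile as the one obtained from $\mf_\eps$. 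This cancellation is precisely what makes \eqref{Structure-TE1} and \eqref{Structure-TE2} coincide.
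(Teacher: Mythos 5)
Your setup and your treatment of $\mf_\eps$ track the paper's proof exactly: the same factorization of $\mathbb{K}$, the same computation of $\mathbb{KK}^*/\sigma^2$ with diagonal $N/2-1$ and off-diagonal $(N/2-2)(\vt_m\cdot\vt_l)e^{i\kb(\vt_m-\vt_l)\cdot\mz}$, and the same use of the first and second identities of Lemma \ref{TheoremBessel-TE} to reduce $\abs{\mathbb{P}_{\noise}(\mf_\eps(\mx))}^2$ to the quadratic in $J_1(\kb\abs{\mx-\mz})^2$ of \eqref{Structure-TE1}. That part is fine. The problems are in the two steps you defer to the end.

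First, the $\mf_\mu$ reduction. After applying the third identity of Lemma \ref{TheoremBessel-TE}, the squared norm contains $\big(J_0(\kb r)+J_2(\kb r)\big)^2$ terms, cross terms, and $J_2(\kb r)^2$ terms (with $r=\abs{\mx-\mz}$). You propose to collapse these to the clean $J_1(\kb r)^2$ profile of \eqref{Structure-TE2} via the recurrence $J_0(x)+J_2(x)=(2/x)J_1(x)$, but this substitution produces $4J_1(\kb r)^2/(\kb r)^2$, not $J_1(\kb r)^2$, and it does nothing to the residual $J_2(\kb r)^2$ and directionally weighted cross terms, which carry explicit $\vx$- and $(\mx-\mz)/\abs{\mx-\mz}$-dependence. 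You acknowledge this is ``the main obstacle'' and assert the cancellation happens, but you never exhibit it, and the recurrence you invoke cannot by itself deliver the stated form. (For what it is worth, the paper's own proof also stops mid-computation of $\sum_n\Psi_6\overline{\Psi}_6$ and never completes this step, so you are not being held to a standard the paper meets; but your claimed mechanism for closing the gap does not work.)

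Second, your determination of $C_\mu$ is internally contradictory. You say that at $\mx=\mz$ the $J_1$-terms vanish because $J_1(0)=0$, ``leaving a quadratic in $C_\mu$ whose positive root reproduces the displayed value.'' But if every $J_1$-term vanishes, the quadratic degenerates to $(1-C_\mu)^2=0$, forcing $C_\mu=1$, which is incompatible with $C_\mu=O(N^{-2})$ and with the displayed value. This is exactly the point where the TE case differs structurally from the TM case: there $J_0(0)=1$ keeps the consistency condition informative and yields $C_\eps=(N-1)^{-2}$, whereas here it collapses. The paper does not derive $C_\mu$ from any consistency condition --- it simply defines $C_\mu$ as the squared normalized prefactor of $\mathbb{K}$ --- and it explicitly concedes in Remark \ref{Remark-TE} and the Conclusion that the exact form of $C_\mu$ remains unknown. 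Your final step therefore cannot be carried out as described.
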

\begin{proof}
Based on \eqref{MSR-TE}, \eqref{MSR}, and \eqref{SVD}, $\mathbb{K}$ can be written
\[\mathbb{K}\approx\frac{\alpha^2\kb^2(1+i)\mub\pi}{2(\mua+\mub)\sqrt{\kb\pi}}
\begin{pmatrix}
\medskip 0 & (\vt_1\cdot\vt_2)e^{i\kb(\vt_1+\vt_2)\cdot\mz} & \cdots & (\vt_1\cdot\vt_N)e^{i\kb(\vt_1+\vt_N)\cdot\mz}\\
(\vt_2\cdot\vt_1)e^{i\kb(\vt_2+\vt_1)\cdot\mz} & 0 & \cdots & (\vt_2\cdot\vt_N)e^{i\kb(\vt_2+\vt_N)\cdot\mz}\\
\medskip\vdots&\vdots&\ddots&\vdots\\
(\vt_N\cdot\vt_1)e^{i\kb(\vt_N+\vt_1)\cdot\mz} & (\vt_N\cdot\vt_2)e^{i\kb(\vt_N+\vt_2)\cdot\mz} & \cdots & 0\\
\end{pmatrix}.\]
Then, since $\sigma_1\approx\sigma_2=\sigma$, performing an elementary calculus yields
\begin{align*}
&\frac{1}{\sigma^2}\mathbb{KK}^*\\
&=C_\mu
\begin{pmatrix}
N/2-1&(N/2-2)(\vt_1\cdot\vt_2)e^{i\kb(\vt_1-\vt_2)\cdot\mz}&\cdots&(N/2-2)(\vt_1\cdot\vt_N)e^{i\kb(\vt_1-\vt_N)\cdot\mz}\\
(N/2-2)(\vt_2\cdot\vt_1)e^{i\kb(\vt_2-\vt_1)\cdot\mz}&N/2-1&\cdots&(N/2-2)(\vt_2\cdot\vt_N)e^{i\kb(\vt_2-\vt_N)\cdot\mz}\\
\medskip\vdots&\vdots&\ddots&\vdots\\
(N/2-2)(\vt_N\cdot\vt_1)e^{i\kb(\vt_N-\vt_1)\cdot\mz}&
(N/2-2)(\vt_N\cdot\vt_2)e^{i\kb(\vt_N-\vt_2)\cdot\mz}&\cdots&N/2-1
\end{pmatrix}
\end{align*}
and correspondingly,
\begin{multline*}
\mathbb{I}-\sum_{m=1}^{2}\mU_m\mU_m^*=\mathbb{I}-\frac{1}{\sigma^2}\mathbb{KK}^*=\\
(1-C_\mu)\mathbb{I}-C_\mu\left(\frac{N}{2}-2\right)\begin{pmatrix}
(\vt_1\cdot\vt_1)e^{i\kb(\vt_1-\vt_1)\cdot\mz}&(\vt_1\cdot\vt_2)e^{i\kb(\vt_1-\vt_2)\cdot\mz}&\cdots&(\vt_1\cdot\vt_N)e^{i\kb(\vt_1-\vt_N)\cdot\mz}\\
(\vt_2\cdot\vt_1)e^{i\kb(\vt_2-\vt_1)\cdot\mz}&(\vt_2\cdot\vt_2)e^{i\kb(\vt_2-\vt_2)\cdot\mz}&\cdots&(\vt_2\cdot\vt_N)e^{i\kb(\vt_2-\vt_N)\cdot\mz}\\
\vdots&\vdots&\ddots&\vdots\\
(\vt_N\cdot\vt_1)e^{i\kb(\vt_N-\vt_1)\cdot\mz}&(\vt_N\cdot\vt_2)e^{i\kb(\vt_N-\vt_2)\cdot\mz}&\cdots&(\vt_N\cdot\vt_N)e^{i\kb(\vt_N-\vt_N)\cdot\mz}
\end{pmatrix}.
\end{multline*}

First, we consider the structure of $\mathbb{P}_{\noise}(\mf_\eps(\mx))$. Based on \eqref{Identity2}, since
\begin{align*}
&\begin{pmatrix}
(\vt_1\cdot\vt_1)e^{i\kb(\vt_1-\vt_1)\cdot\mz}&(\vt_1\cdot\vt_2)e^{i\kb(\vt_1-\vt_2)\cdot\mz}&\cdots&(\vt_1\cdot\vt_N)e^{i\kb(\vt_1-\vt_N)\cdot\mz}\\
(\vt_2\cdot\vt_1)e^{i\kb(\vt_2-\vt_1)\cdot\mz}&(\vt_2\cdot\vt_2)e^{i\kb(\vt_2-\vt_2)\cdot\mz}&\cdots&(\vt_2\cdot\vt_N)e^{i\kb(\vt_2-\vt_N)\cdot\mz}\\
\vdots&\vdots&\ddots&\vdots\\
(\vt_N\cdot\vt_1)e^{i\kb(\vt_N-\vt_1)\cdot\mz}&(\vt_N\cdot\vt_2)e^{i\kb(\vt_N-\vt_2)\cdot\mz}&\cdots&(\vt_N\cdot\vt_N)e^{i\kb(\vt_N-\vt_N)\cdot\mz}
\end{pmatrix}\begin{pmatrix}
e^{i\kb\vt_1\cdot\mx}\\
e^{i\kb\vt_2\cdot\mx}\\
\vdots\\
e^{i\kb\vt_N\cdot\mx}\\
\end{pmatrix}\\
&=\begin{pmatrix}
\displaystyle\medskip e^{i\kb\vt_1\cdot\mz}\sum_{n=1}^{N}(\vt_1\cdot\vt_n)e^{i\kb\vt_n\cdot(\mx-\mz)}\\
\displaystyle e^{i\kb\vt_2\cdot\mz}\sum_{n=1}^{N}(\vt_2\cdot\vt_n)e^{i\kb\vt_n\cdot(\mx-\mz)}\\
\vdots\\
\displaystyle e^{i\kb\vt_N\cdot\mz}\sum_{n=1}^{N}(\vt_N\cdot\vt_n)e^{i\kb\vt_n\cdot(\mx-\mz)}
\end{pmatrix}
=\begin{pmatrix}
\displaystyle\medskip iNe^{i\kb\vt_1\cdot\mz}\left(\frac{\mx-\mz}{|\mx-\mz|}\cdot\vt_1\right)J_1(\kb|\mx-\mz|)\\
\displaystyle iNe^{i\kb\vt_2\cdot\mz}\left(\frac{\mx-\mz}{|\mx-\mz|}\cdot\vt_2\right)J_1(\kb|\mx-\mz|)\\
\vdots\\
\displaystyle iNe^{i\kb\vt_N\cdot\mz}\left(\frac{\mx-\mz}{|\mx-\mz|}\cdot\vt_N\right)J_1(\kb|\mx-\mz|)
\end{pmatrix},
\end{align*}
we can examine that
\begin{align*}
\mathbb{P}_{\noise}(\mf_\eps(\mx))&=\left(\mathbb{I}-\sum_{m=1}^{2}\mU_m\mU_m^*\right)\mf_\eps(\mx)\\
&=\frac{1-C_\mu}{\sqrt{N}}\begin{pmatrix}
e^{i\kb\vt_1\cdot\mx}\\
e^{i\kb\vt_2\cdot\mx}\\
\vdots\\
e^{i\kb\vt_N\cdot\mx}\\
\end{pmatrix}
-iC_\mu\left(\frac{N}{2}-2\right)\sqrt{N}
\begin{pmatrix}
\displaystyle\medskip e^{i\kb\vt_1\cdot\mz}\left(\frac{\mx-\mz}{|\mx-\mz|}\cdot\vt_1\right)J_1(\kb|\mx-\mz|)\\
\displaystyle e^{i\kb\vt_2\cdot\mz}\left(\frac{\mx-\mz}{|\mx-\mz|}\cdot\vt_2\right)J_1(\kb|\mx-\mz|)\\
\vdots\\
\displaystyle e^{i\kb\vt_N\cdot\mz}\left(\frac{\mx-\mz}{|\mx-\mz|}\cdot\vt_N\right)J_1(\kb|\mx-\mz|)
\end{pmatrix}.
\end{align*}

Now, let us write
\[|\mathbb{P}_{\noise}(\mf_\eps(\mx))|=\left(\mathbb{P}_{\noise}(\mf_\eps(\mx))\cdot\overline{\mathbb{P}_{\noise}(\mf_\eps(\mx))}\right)^{1/2}=\left(\sum_{n=1}^{N}\bigg(\frac{(1-C_\mu)^2}{N}+(\Psi_3+\overline{\Psi}_3)+\Psi_4\overline\Psi_4\bigg)\right)^{1/2},\]
where
\begin{align*}
\Psi_3&=i(1-C_\mu)C_\mu\left(\frac{N}{2}-2\right)e^{i\kb\vt_n\cdot(\mx-\mz)}\left(\frac{\mx-\mz}{|\mx-\mz|}\cdot\vt_n\right)J_1(\kb|\mx-\mz|)\\
\Psi_4&=-iC_\mu\left(\frac{N}{2}-2\right)\sqrt{N}e^{i\kb\vt_n\cdot\mz}\left(\frac{\mx-\mz}{|\mx-\mz|}\cdot\vt_n\right)J_1(\kb|\mx-\mz|).
\end{align*}

Applying \eqref{Identity2}, we can examine that
\[\sum_{n=1}^{N}\left(\frac{\mx-\mz}{|\mx-\mz|}\cdot\vt_n\right)e^{i\kb\vt_n\cdot(\mx-\mz)}=iN\left(\frac{\mx-\mz}{|\mx-\mz|}\cdot\frac{\mx-\mz}{|\mx-\mz|}\right)J_1(\kb|\mx-\mz|)=J_1(\kb|\mx-\mz|)\]
Hence,
\[\sum_{n=1}^{N}(\Psi_3+\overline{\Psi}_3)=-2N(1-C_\mu)C_\mu\left(\frac{N}{2}-2\right)J_1(\kb|\mx-\mz|)^2.\]
Now, applying \eqref{Identity2} again, we can examine that
\[\sum_{n=1}^{N}\Psi_4\overline\Psi_4=C_\mu^2\left(\frac{N}{2}-2\right)^2NJ_1(\kb|\mx-\mz|)^2\sum_{n=1}^{N}\left(\frac{\mx-\mz}{|\mx-\mz|}\cdot\vt_n\right)^2=\frac{N^2}{2}C_\mu^2\left(\frac{N}{2}-2\right)^2J_1(\kb|\mx-\mz|)^2.\]
Therefore,
\[|\mathbb{P}_{\noise}(\mf_\eps(\mx))|=\left((1-C_\mu)^2-2N(1-C_\mu)C_\mu\left(\frac{N}{2}-2\right)J_1(\kb|\mx-\mz|)^2+\frac{N^2}{2}C_\mu^2\left(\frac{N}{2}-2\right)^2J_1(\kb|\mx-\mz|)^2\right)^{1/2}.\]

Next, we consider the structure of $\mathbb{P}_{\noise}(\mf_\mu(\mx))$. Based on \eqref{Identity2}, since
\begin{align*}
&\begin{pmatrix}
(\vt_1\cdot\vt_1)e^{i\kb(\vt_1-\vt_1)\cdot\mz}&(\vt_1\cdot\vt_2)e^{i\kb(\vt_1-\vt_2)\cdot\mz}&\cdots&(\vt_1\cdot\vt_N)e^{i\kb(\vt_1-\vt_N)\cdot\mz}\\
(\vt_2\cdot\vt_1)e^{i\kb(\vt_2-\vt_1)\cdot\mz}&(\vt_2\cdot\vt_2)e^{i\kb(\vt_2-\vt_2)\cdot\mz}&\cdots&(\vt_2\cdot\vt_N)e^{i\kb(\vt_2-\vt_N)\cdot\mz}\\
\vdots&\vdots&\ddots&\vdots\\
(\vt_N\cdot\vt_1)e^{i\kb(\vt_N-\vt_1)\cdot\mz}&(\vt_N\cdot\vt_2)e^{i\kb(\vt_N-\vt_2)\cdot\mz}&\cdots&(\vt_N\cdot\vt_N)e^{i\kb(\vt_N-\vt_N)\cdot\mz}
\end{pmatrix}\begin{pmatrix}
(\vt_1\cdot\vx)e^{i\kb\vt_1\cdot\mx}\\
(\vt_2\cdot\vx)e^{i\kb\vt_2\cdot\mx}\\
\vdots\\
(\vt_N\cdot\vx)e^{i\kb\vt_N\cdot\mx}\\
\end{pmatrix}\\
&=\begin{pmatrix}
\displaystyle\medskip e^{i\kb\vt_1\cdot\mz}\sum_{n=1}^{N}(\vt_1\cdot\vt_n)(\vx\cdot\vt_n)e^{i\kb\vt_n\cdot(\mx-\mz)}\\
\displaystyle e^{i\kb\vt_2\cdot\mz}\sum_{n=1}^{N}(\vt_2\cdot\vt_n)(\vx\cdot\vt_n)e^{i\kb\vt_n\cdot(\mx-\mz)}\\
\vdots\\
\displaystyle e^{i\kb\vt_N\cdot\mz}\sum_{n=1}^{N}(\vt_N\cdot\vt_n)(\vx\cdot\vt_n)e^{i\kb\vt_n\cdot(\mx-\mz)}
\end{pmatrix}\\
&=\begin{pmatrix}
\displaystyle\medskip \frac{N}{2}(\vt_1\cdot\vx)e^{i\kb\vt_1\cdot\mz}\Big(J_0(\kb|\mx-\mz|)+J_2(\kb|\mx-\mz|)\Big)-N\left(\frac{\mx-\mz}{|\mx-\mz|}\cdot\vt_1\right)\left(\frac{\mx-\mz}{|\mx-\mz|}\cdot\vx\right)J_2(\kb|\mx-\mz|)\\
\displaystyle \frac{N}{2}(\vt_2\cdot\vx)e^{i\kb\vt_2\cdot\mz}\Big(J_0(\kb|\mx-\mz|)+J_2(\kb|\mx-\mz|)\Big)-N\left(\frac{\mx-\mz}{|\mx-\mz|}\cdot\vt_2\right)\left(\frac{\mx-\mz}{|\mx-\mz|}\cdot\vx\right)J_2(\kb|\mx-\mz|)\\
\vdots\\
\displaystyle \frac{N}{2}(\vt_N\cdot\vx)e^{i\kb\vt_N\cdot\mz}\Big(J_0(\kb|\mx-\mz|)+J_2(\kb|\mx-\mz|)\Big)-N\left(\frac{\mx-\mz}{|\mx-\mz|}\cdot\vt_N\right)\left(\frac{\mx-\mz}{|\mx-\mz|}\cdot\vx\right)J_2(\kb|\mx-\mz|)
\end{pmatrix},
\end{align*}
we can examine that
\begin{multline*}
\mathbb{P}_{\noise}(\mf_\mu(\mx))=\left(\mathbb{I}-\sum_{m=1}^{2}\mU_m\mU_m^*\right)\mf_\mu(\mx)=(1-C_\mu)\sqrt{\frac{2}{N}}\begin{pmatrix}
(\vt_1\cdot\vx)e^{i\kb\vt_1\cdot\mx}\\
(\vt_2\cdot\vx)e^{i\kb\vt_2\cdot\mx}\\
\vdots\\
(\vt_N\cdot\vx)e^{i\kb\vt_N\cdot\mx}\\
\end{pmatrix}\\
-C_\mu\left(\frac{N}{2}-2\right)\sqrt{2N}
\begin{pmatrix}
\displaystyle\medskip \frac{(\vt_1\cdot\vx)}{2}e^{i\kb\vt_1\cdot\mz}\Big(J_0(\kb|\mx-\mz|)+J_2(\kb|\mx-\mz|)\Big)-\left(\frac{\mx-\mz}{|\mx-\mz|}\cdot\vt_1\right)\left(\frac{\mx-\mz}{|\mx-\mz|}\cdot\vx\right)J_2(\kb|\mx-\mz|)\\
\displaystyle \frac{(\vt_2\cdot\vx)}{2}e^{i\kb\vt_2\cdot\mz}\Big(J_0(\kb|\mx-\mz|)+J_2(\kb|\mx-\mz|)\Big)-\left(\frac{\mx-\mz}{|\mx-\mz|}\cdot\vt_2\right)\left(\frac{\mx-\mz}{|\mx-\mz|}\cdot\vx\right)J_2(\kb|\mx-\mz|)\\
\vdots\\
\displaystyle \frac{(\vt_N\cdot\vx)}{2}e^{i\kb\vt_N\cdot\mz}\Big(J_0(\kb|\mx-\mz|)+J_2(\kb|\mx-\mz|)\Big)-\left(\frac{\mx-\mz}{|\mx-\mz|}\cdot\vt_N\right)\left(\frac{\mx-\mz}{|\mx-\mz|}\cdot\vx\right)J_2(\kb|\mx-\mz|)
\end{pmatrix}.
\end{multline*}

Now, let us write
\[|\mathbb{P}_{\noise}(\mf_\mu(\mx))|=\left(\mathbb{P}_{\noise}(\mf_\mu(\mx))\cdot\overline{\mathbb{P}_{\noise}(\mf_\mu(\mx))}\right)^{1/2}=\left(\sum_{n=1}^{N}\bigg(\frac{2(1-C_\mu)^2}{N}+(\Psi_5+\overline{\Psi}_5)+\Psi_6\overline\Psi_6\bigg)\right)^{1/2},\]
where
\begin{align*}
\Psi_5=&(1-C_\mu)C_\mu\left(\frac{N}{2}-2\right)(\vt_n\cdot\vx)^2e^{i\kb\vt_1\cdot(\mx-\mz)}\Big(J_0(\kb|\mx-\mz|)+J_2(\kb|\mx-\mz|)\Big)\\
&-2(1-C_\mu)C_\mu\left(\frac{N}{2}-2\right)(\vt_n\cdot\vx)\left(\frac{\mx-\mz}{|\mx-\mz|}\cdot\vt_n\right)\left(\frac{\mx-\mz}{|\mx-\mz|}\cdot\vx\right)J_2(\kb|\mx-\mz|)\\
\Psi_6=&C_\mu\left(\frac{N}{2}-2\right)\sqrt{2N}\left[\frac{(\vt_n\cdot\vx)}{2}e^{i\kb\vt_n\cdot\mz}\Big(J_0(\kb|\mx-\mz|)+J_2(\kb|\mx-\mz|)\Big)\right.\\
&\left.-\left(\frac{\mx-\mz}{|\mx-\mz|}\cdot\vt_n\right)\left(\frac{\mx-\mz}{|\mx-\mz|}\cdot\vx\right)J_2(\kb|\mx-\mz|)\right].
\end{align*}
Now, applying \eqref{Identity2}, we can evaluate
\begin{align*}
\sum_{n=1}^{N}(\Psi_5+\overline{\Psi}_5)&=(1-C_\mu)C_\mu\left(\frac{N}{2}-2\right)\Big(J_0(\kb|\mx-\mz|)+J_2(\kb|\mx-\mz|)\Big)\left(\sum_{n=1}^{N}(\vt_n\cdot\vx)^2\big(e^{i\kb\vt_1\cdot(\mx-\mz)}+e^{-i\kb\vt_1\cdot(\mx-\mz)}\big)\right)\\
&=N(1-C_\mu)C_\mu\left(\frac{N}{2}-2\right)\Big(J_0(\kb|\mx-\mz|)+J_2(\kb|\mx-\mz|)\Big)^2
\end{align*}
and
\begin{align*}
&\sum_{n=1}^{N}(\Psi_6\overline{\Psi}_6)=2NC_\mu^2\left(\frac{N}{2}-2\right)^2\left[\sum_{n=1}^{N}\frac{(\vt_n\cdot\vx)^2}{4}\Big(J_0(\kb|\mx-\mz|)+J_2(\kb|\mx-\mz|)\Big)^2\right.\\
&.-\frac{1}{2}\left(\frac{\mx-\mz}{|\mx-\mz|}\cdot\vx\right)\Big(J_0(\kb|\mx-\mz|)+J_2(\kb|\mx-\mz|)\Big)J_2(\kb|\mx-\mz|)\left\{\sum_{n=1}^{N}(\vt_n\cdot\vx)\left(\frac{\mx-\mz}{|\mx-\mz|}\cdot\vt_n\right)\big(e^{i\kb\vt_n\cdot\mz}+e^{-i\kb\vt_n\cdot\mz}\big)\right\}\\
&\left.+\sum_{n=1}^{N}\left(\frac{\mx-\mz}{|\mx-\mz|}\cdot\vt_n\right)^2\left(\frac{\mx-\mz}{|\mx-\mz|}\cdot\vx\right)^2J_2(\kb|\mx-\mz|)^2\right]\\
&=2NC_\mu^2\left(\frac{N}{2}-2\right)^2\left[\frac{N}{8}\Big(J_0(\kb|\mx-\mz|)+J_2(\kb|\mx-\mz|)\Big)^2\right.\\
&.-\frac{1}{2}\left(\frac{\mx-\mz}{|\mx-\mz|}\cdot\vx\right)\Big(J_0(\kb|\mx-\mz|)+J_2(\kb|\mx-\mz|)\Big)J_2(\kb|\mx-\mz|)\\
&\times\left\{N\left(\frac{\mx-\mz}{|\mx-\mz|}\cdot\vx\right)\Big(J_0(\kb|\mx-\mz|)+J_2(\kb|\mx-\mz|)\Big)\left(\frac{\mx-\mz}{|\mx-\mz|}\cdot\vt_n\right)\left(\frac{\mx-\mz}{|\mx-\mz|}\cdot\vx\right)J_2(\kb|\mx-\mz|)\right\}\\
&\left.+\left(\frac{\mx-\mz}{|\mx-\mz|}\cdot\vt_n\right)^2\left(\frac{\mx-\mz}{|\mx-\mz|}\cdot\vx\right)^2J_2(\kb|\mx-\mz|)^2\right]
\end{align*}
\end{proof}

\begin{rem}\label{Remark-TE}Based on the identified structure \eqref{Structure-TE}, we can observe several properties of the imaging function.
\begin{enumerate}
\item The imaging function $\mathfrak{F}_{\dm}(\mx)$ consists of $J_1(\kb|\mx-\mz_m|)$. Therefore, as in traditional MUSIC-type imaging, the map of $\mathfrak{F}_{\dm}(\mx)$ will contain two peaks of large large magnitude in the neighborhood of ${D}$ and many artifacts with small magnitude.
%\item Based on \cite{L}, $J_1(x)$ satisfies for all $x\in\mathbb{R}$,
%\[|J_1(x)|\leq\min\set{0.674885,\frac{0.7857468704}{x^{1/3}}}.\]
%Therefore, if $\kb|\mx-\mz_m|\leq(1.57149374/0.674885)^3\approx1.578187694985616$ (i.e., in the neighborhood of $\Gamma$), the maximum value of $J_1(\kb|\mx-\mz_m|)$ is less than $0.674885$ (numerically, $J_1(x)\leq0.5819$ for all $x\in\mathbb{R}$) so that for sufficiently large $N$,
%\[\underbrace{\frac{2(N^2-2N)}{(N-1)^2}}_{\leq2}\underbrace{\left(\frac{\mx-\mz_m}{|\mx-\mz_m|}\cdot\vn(\mz_m)\right)^2}_{\leq1}\underbrace{J_1(\kb|\mx-\mz_m|)^2}_{\leq0.455470}+\underbrace{\frac{2}{(N-1)^2}\left(N-\frac14\right)}_{\text{can be disregarded}}\leq0.91094<1.\]
%This means that $|\mathbb{P}_{\noise}(\mf(\mx))|\ne0$ for all $\mx\in\Omega$. Hence, in contrast to TM polarization, there is no blowup in $\mathfrak{F}_{\dm}(\mx)$ even if $N\longrightarrow+\infty$.
\item Following \cite[Section 3.5]{P-MUSIC1}, $f_{\fm}(\mx)$ in the TE case can be represented as
\[f_{\fm}(\mx)=\Big(1-J_1(\kb|\mx-\mz|)^2\Big)^{-1/2}.\]
Therefore, similar to TM polarization, the obtained result should be poorer than the result obtained via $f_{\fm}(\mx)$ i.e., a result of poor resolution will be obtained if $N$ is small.
\item Since
\[\lim_{N\to\infty}\mathfrak{F}_{\dm}(\mx)=f_{\fm}(\mx),\]
it can be said that
\begin{multline*}\lim_{N\to\infty}\bigg[(1-C_\mu)^2-2N(1-C_\mu)C_\mu\left(\frac{N}{2}-2\right)J_1(\kb|\mx-\mz|)^2+\frac{N^2}{2}C_\mu^2\left(\frac{N}{2}-2\right)^2J_1(\kb|\mx-\mz|)^2\bigg]\\
=1-J_1(\kb|\mx-\mz|)^2.
\end{multline*}
In this case, similar to the TM polarization case, $C_\mu=O(N^{-2})$ but complete form of the $C_\mu$ is unknown. Notice that if $C_\mu$ satisfies
\[N\left(\frac{N}{2}-2\right)C_\mu=(2+\sqrt2)(1-C_\mu)\quad\text{or equivalently}\quad C_\mu=\frac{4+2\sqrt{2}}{N^2-4N+4+2\sqrt2}=O\left(\frac{1}{N^2}\right),\]
then similar to the TM polarization case, $\mathfrak{F}_{\dm}(\mx)$ can be written as
\[\mathfrak{F}_{\dm}(\mx)=\frac{1}{1-C_\mu}\Big(1-J_1(\kb|\mx-\mz|)^2\Big)^{-1/2}=\left(\frac{N^2-4N+4+2\sqrt2}{N^2-4N}\right)\Big(1-J_1(\kb|\mx-\mz|)^2\Big)^{-1/2}.\]
\item Opposite to the Corollary \ref{CorUnique}, the unique determination in TE polarization cannot be guaranteed via the map of $\mathfrak{F}_{\dm}(\mx)$.
\end{enumerate}
\end{rem}

\section{Simulation results with synthetic and experimental data}\label{sec:5}
In order to validate the results in Theorems \ref{Theorem-TM} and \ref{Theorem-TE}, some results of numerical simulation are exhibited. Motivated by the simulation configuration \cite{BS}, we set the far-field pattern data were obtained in the anechoic chamber with vacuum permittivity $\epsb=\SI{8.854e-12}{\farad/\meter}$ and permeability $\mub=\SI{1.257e-6}{\henry/\meter}$. The ROI $\Omega$ was selected as a square region $(-\SI{0.1}{\meter},\SI{0.1}{\meter})\times(-\SI{0.1}{\meter},\SI{0.1}{\meter})$ and three circular inhomogeneities $D_m\subset\Omega$, $m=1,2,3$, with same radii $\alpha_m=\SI{0.01}{\meter}$, permittivities $\eps_m$, permeabilities $\mu_m$, and locations $\mz_1=(\SI{0.07}{\meter},\SI{0.05}{\meter})$, $\mz_2=(-\SI{0.07}{\meter},\SI{0.00}{\meter})$, and $\mz_3=(\SI{0.02}{\meter},-\SI{0.05}{\meter})$ were chosen. With this configuration, the far-field pattern data $u_{\infty}(\vv_j,\vt_l)$ of $\mathbb{K}$ were generated by solving the Foldy-Lax formulation (see \cite{HSZ4} for instance). After the generation of the far-field pattern, $\SI{20}{\deci\bel}$ white Gaussian random noise was added to the unperturbed data.

\begin{ex}[Dielectric permittivity contrast case]\label{Example-eps}
Figure \ref{Result-eps-1} shows the maps of $\mathfrak{F}_{\tm}(\mx)$ and $\mathfrak{F}_{\dm}(\mx)$ with $f=\SI{1}{\giga\hertz}$ for $N=12$ and $N=36$ directions when $\eps_m=5\epsb$ and $\mu_m\equiv\mub$. Based on this result, the existence of three inhomogeneities $D_m$ can be recognized through the maps of $\mathfrak{F}_{\tm}(\mx)$ and $\mathfrak{F}_{\dm}(\mx)$ but retrieved location of $D_3$ through the map of $\mathfrak{F}_{\dm}(\mx)$ is not accurate.

Figure \ref{Result-eps-2} shows the maps of $\mathfrak{F}_{\tm}(\mx)$ and $\mathfrak{F}_{\dm}(\mx)$ with $f=\SI{2}{\giga\hertz}$ for $N=12$ and $N=36$ directions. Opposite to the imaging result at $f=\SI{1}{\giga\hertz}$, it is possible to retrieve the location of every inhomogeneities very accurately through the map of $\mathfrak{F}_{\dm}(\mx)$. However, although retrieved location of $D_3$ with $N=36$ is more accurate than the one with $N=12$, exact location of $D_3$ cannot be retrieved still.

Based on the simulation results, we can conclude that it will be very difficult to identify exact location of inhomogeneities without diagonal elements of MSR matrix when total number of directions $N$ is small. This is the reason why the condition of sufficiently large $N$ is needed in Theorem \ref{Theorem-TM}.
\end{ex}

\begin{figure}[h]
\begin{center}
\subfigure[with diagonal elements and $N=12$]{\includegraphics[width=0.25\textwidth]{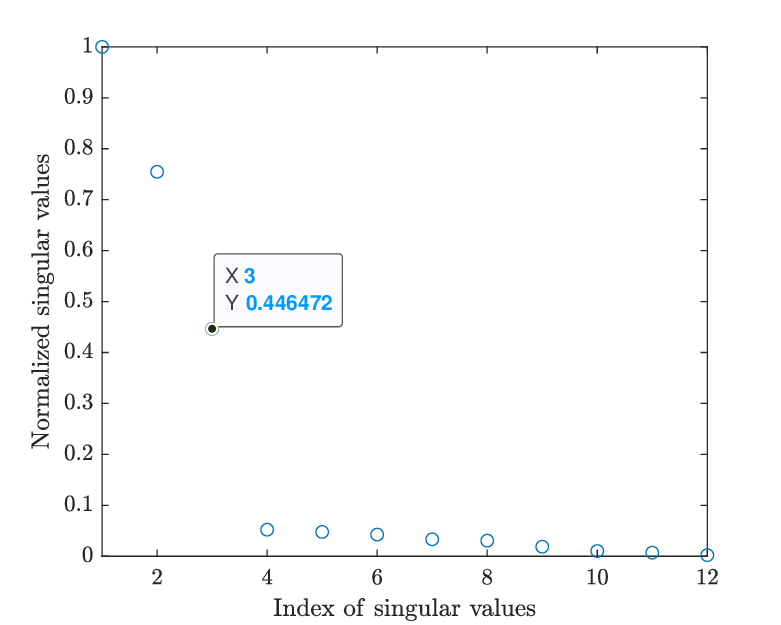}\hfill
\includegraphics[width=0.25\textwidth]{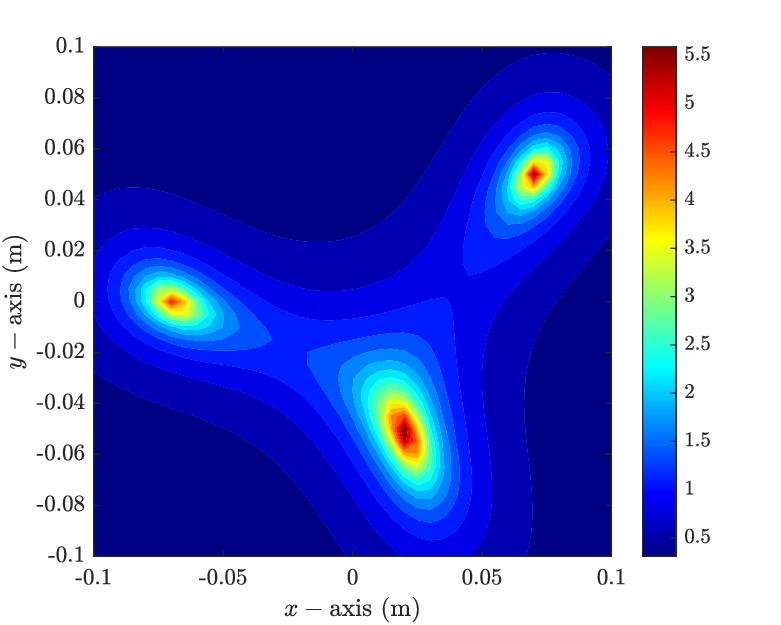}}\hfill
\subfigure[without diagonal elements and $N=12$]{\includegraphics[width=0.25\textwidth]{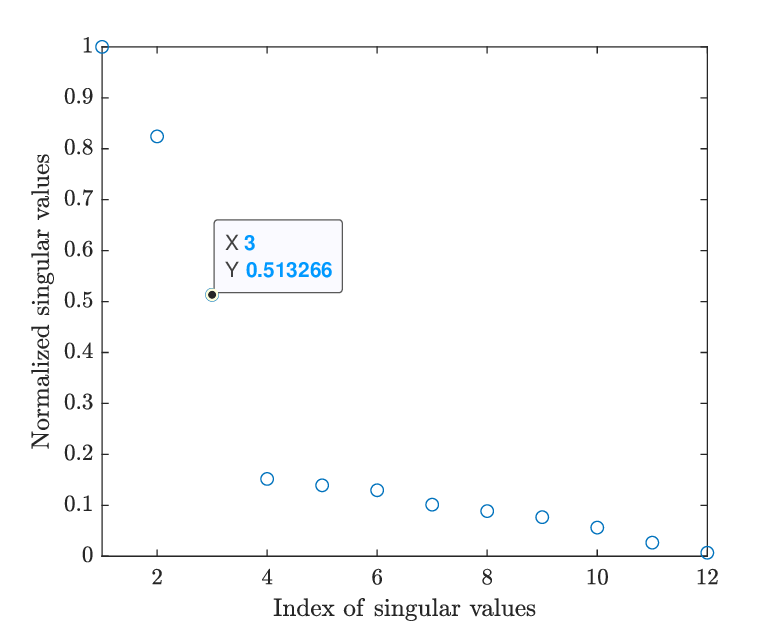}\hfill
\includegraphics[width=0.25\textwidth]{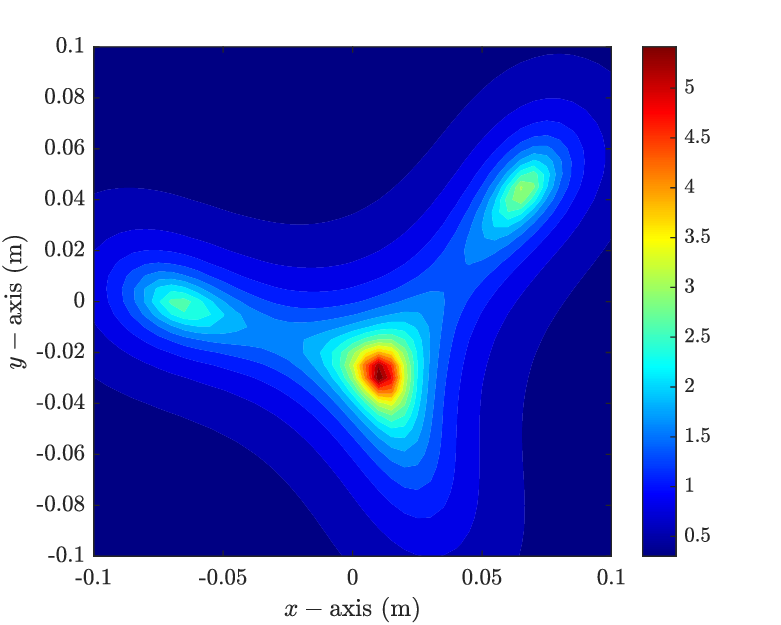}}\\
\subfigure[with diagonal elements and $N=36$]{\includegraphics[width=0.25\textwidth]{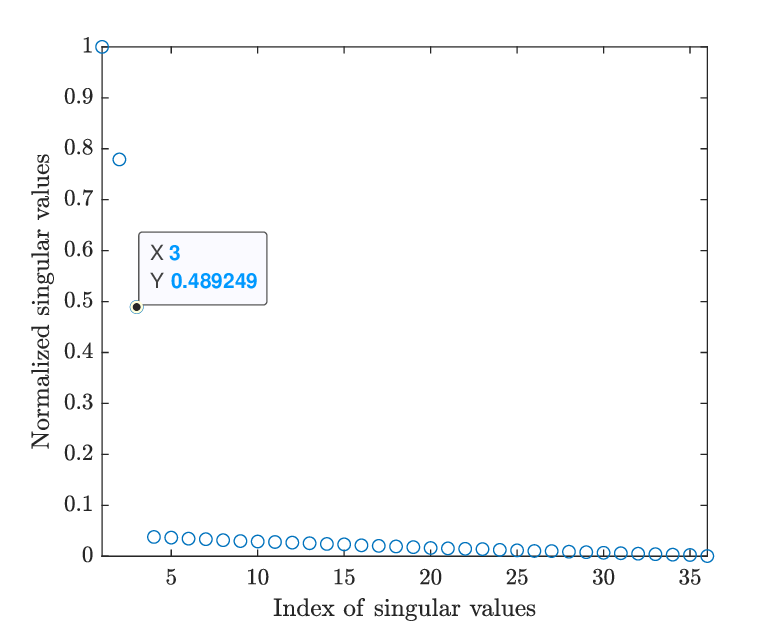}\hfill
\includegraphics[width=0.25\textwidth]{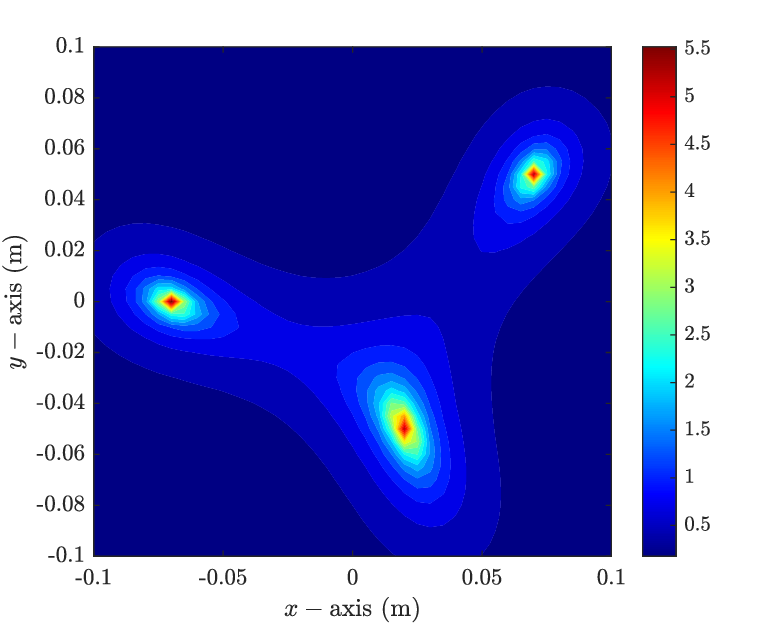}}\hfill
\subfigure[without diagonal elements and $N=36$]{\includegraphics[width=0.25\textwidth]{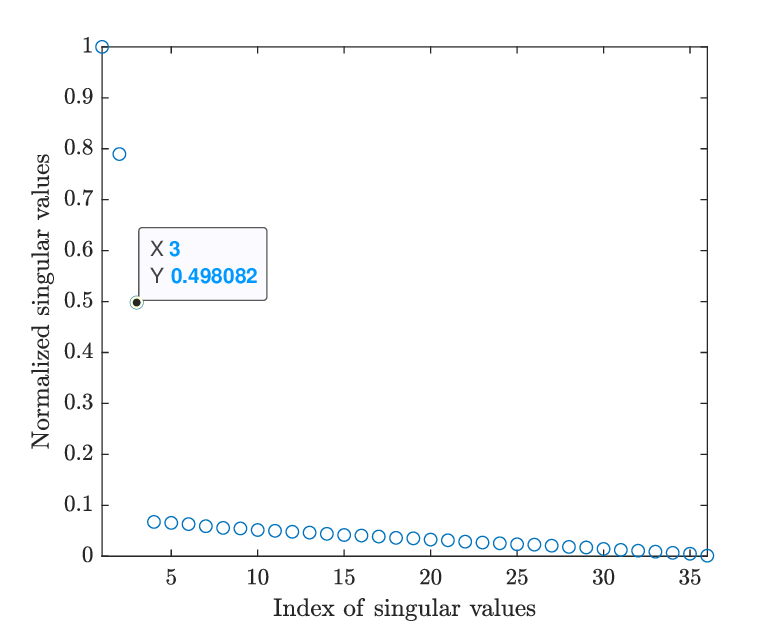}\hfill
\includegraphics[width=0.25\textwidth]{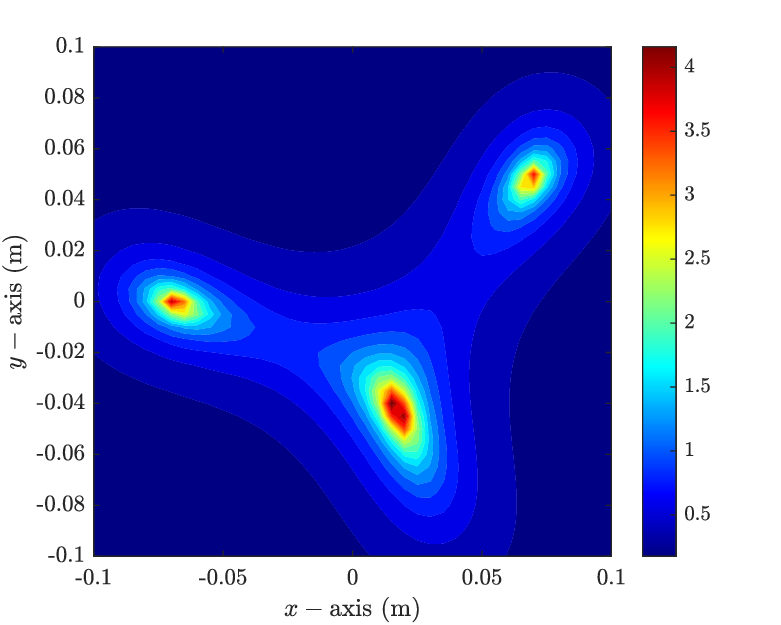}}
\caption{\label{Result-eps-1}(Example \ref{Example-eps}) Distribution of normalized singular values and maps of $\mathfrak{F}_{\tm}(\mx)$ and $\mathfrak{F}_{\dm}(\mx)$ at $f=\SI{1}{\giga\hertz}$.}
\end{center}
\end{figure}

\begin{figure}[h]
\begin{center}
\subfigure[with diagonal elements and $N=12$]{\includegraphics[width=0.25\textwidth]{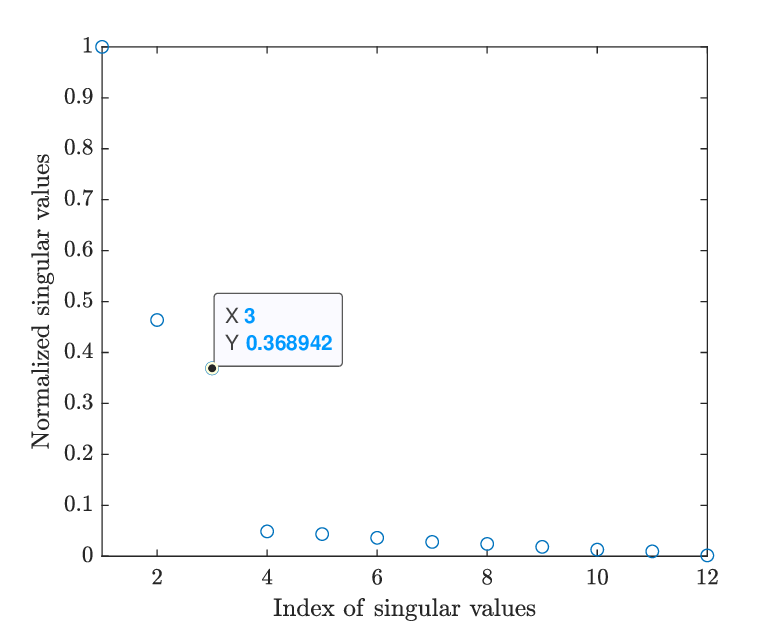}\hfill
\includegraphics[width=0.25\textwidth]{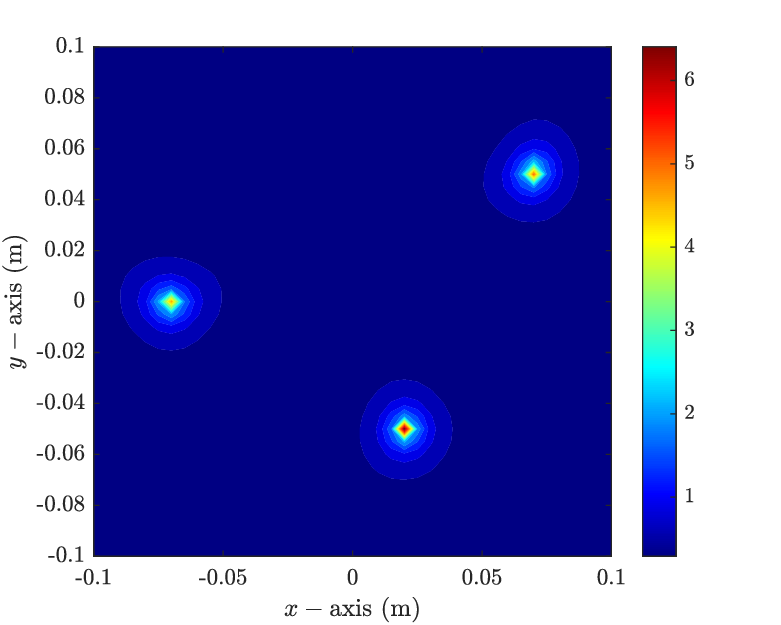}}\hfill
\subfigure[without diagonal elements and $N=12$]{\includegraphics[width=0.25\textwidth]{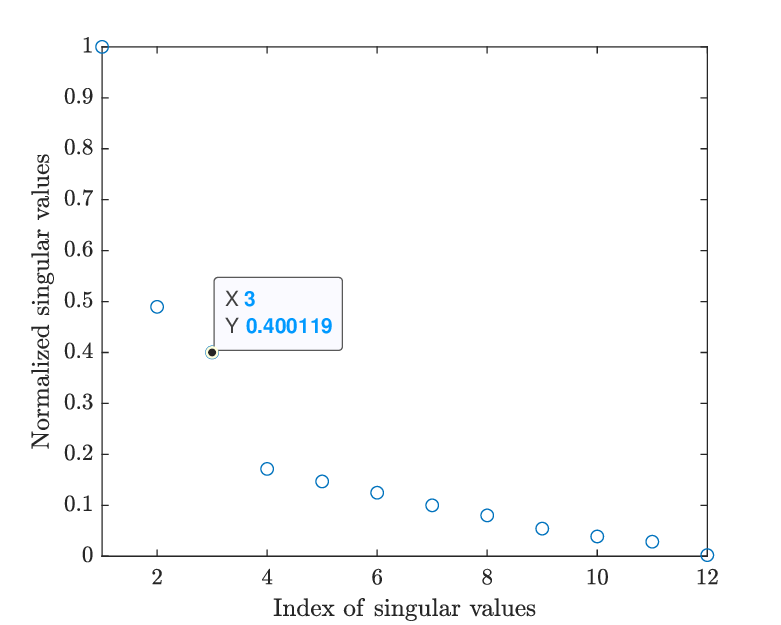}\hfill
\includegraphics[width=0.25\textwidth]{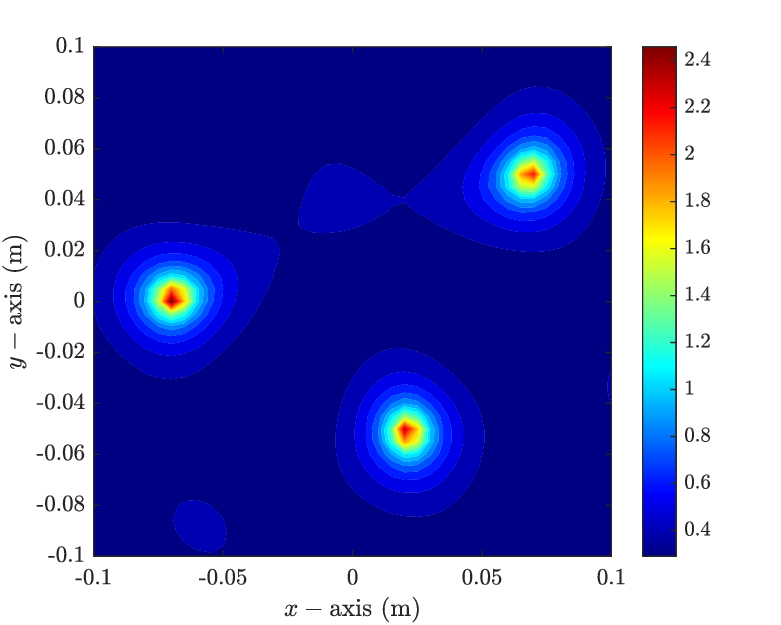}}\\
\subfigure[with diagonal elements and $N=36$]{\includegraphics[width=0.25\textwidth]{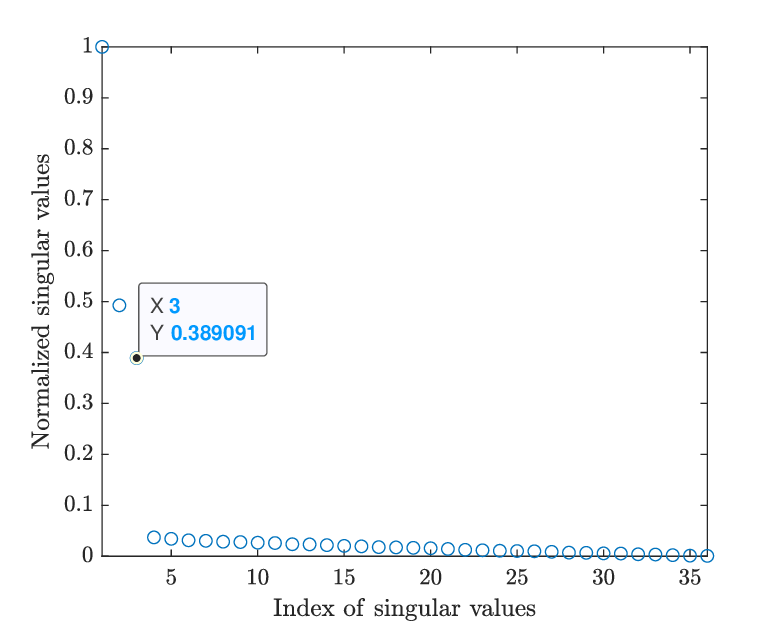}\hfill
\includegraphics[width=0.25\textwidth]{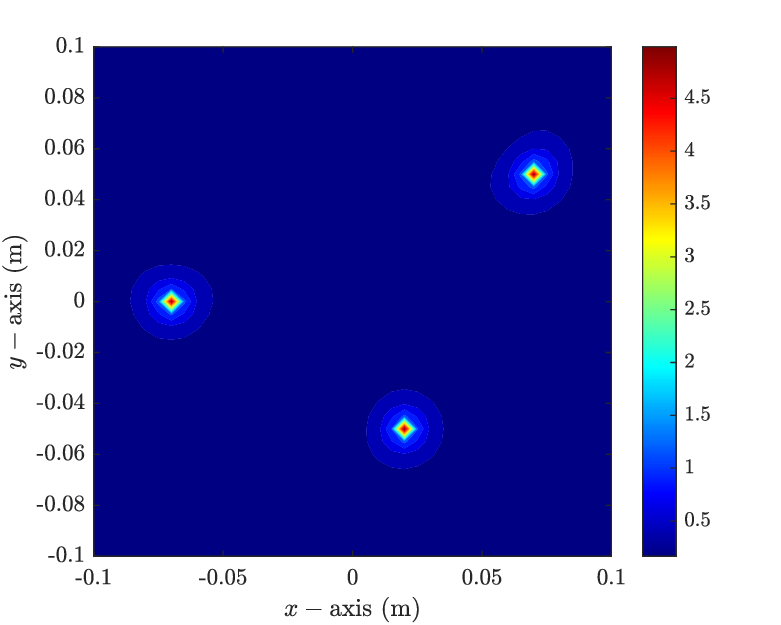}}\hfill
\subfigure[without diagonal elements and $N=36$]{\includegraphics[width=0.25\textwidth]{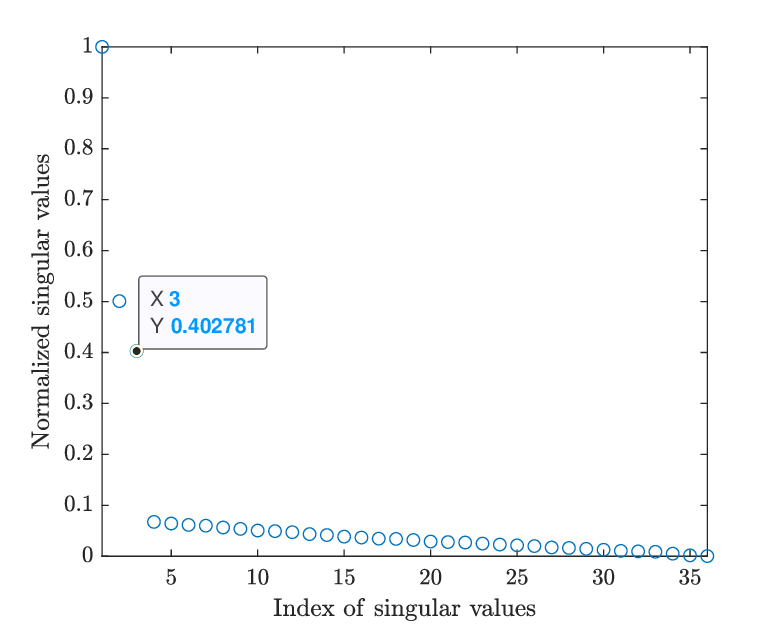}\hfill
\includegraphics[width=0.25\textwidth]{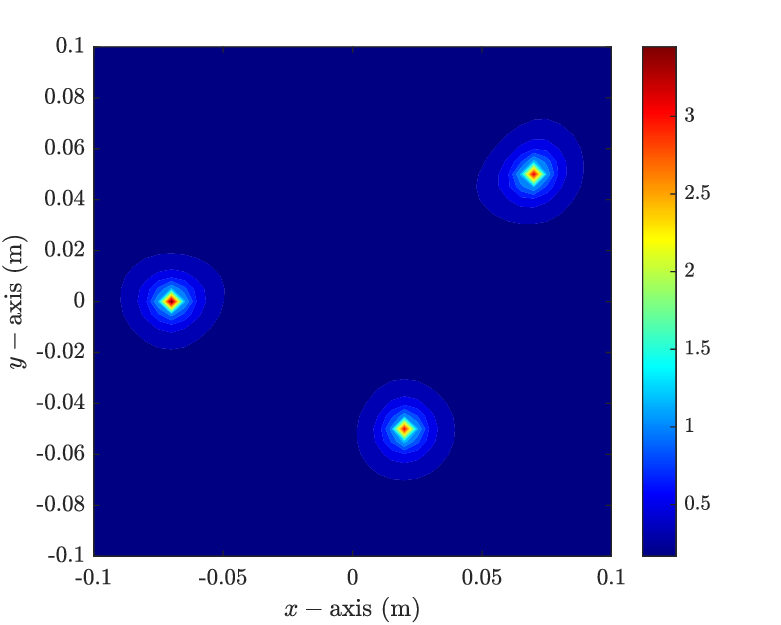}}
\caption{\label{Result-eps-2}(Example \ref{Example-eps}) Distribution of normalized singular values and maps of $\mathfrak{F}_{\tm}(\mx)$ at $f=\SI{2}{\giga\hertz}$.}
\end{center}
\end{figure}

\begin{ex}[Magnetic permeability contrast case]\label{Example-mu}
Figure \ref{Result-mu-1} shows the maps of $\mathfrak{F}_{\tm}(\mx)$ and $\mathfrak{F}_{\dm}(\mx)$ with $f=\SI{2}{\giga\hertz}$ for $N=12$ and $N=36$ directions when $\eps_m\equiv\epsb$ and $\mu_m=5\mub$. Opposite to the Example \ref{Example-eps}, it is impossible to recognize the existence of inhomogeneities with and without diagonal elements of the MSR matrix.

Based on the simulation result with $f=\SI{4}{\giga\hertz}$ with $N=12$ directions, it is still impossible to retrieve the inhomogeneities through the maps of $\mathfrak{F}_{\tm}(\mx)$ and $\mathfrak{F}_{\dm}(\mx)$. Moreover, it is very difficult to discriminate nonzero singular values of the MSR matrix without diagonal elements, refer to Figure \ref{Result-mu-2}. Fortunately, by increasing total number of directions $N=36$, it is very easy to discriminate nonzero singular values and possible to identify the existence of inhomogeneities by regarding the rings in the neighborhood of all $\mz_m\in D_m$. This result supports the Remark \ref{Remark-TE} and we conclude that not only large number of directions $N$ but also high frequency $f$ must be applied to guarantee good imaging results.
\end{ex}

\begin{figure}[h]
\begin{center}
\subfigure[with diagonal elements and $N=12$]{\includegraphics[width=0.25\textwidth]{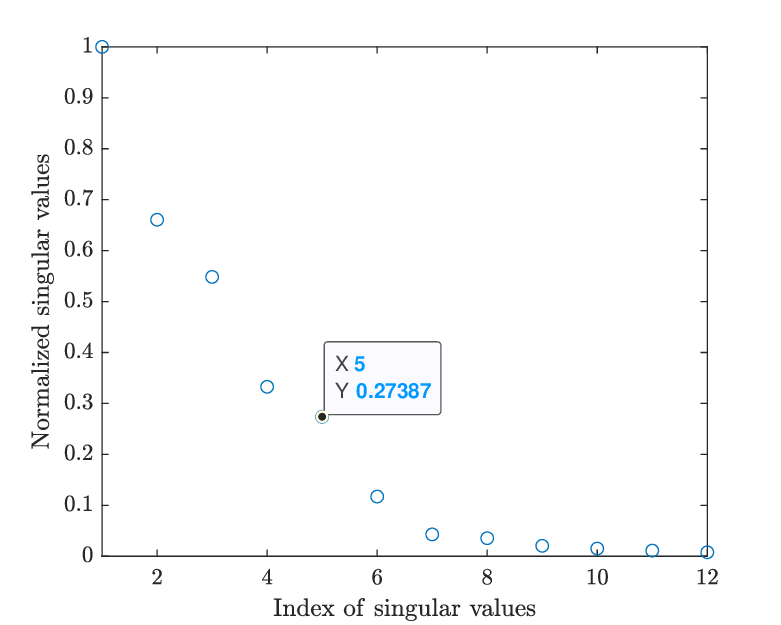}\hfill
\includegraphics[width=0.25\textwidth]{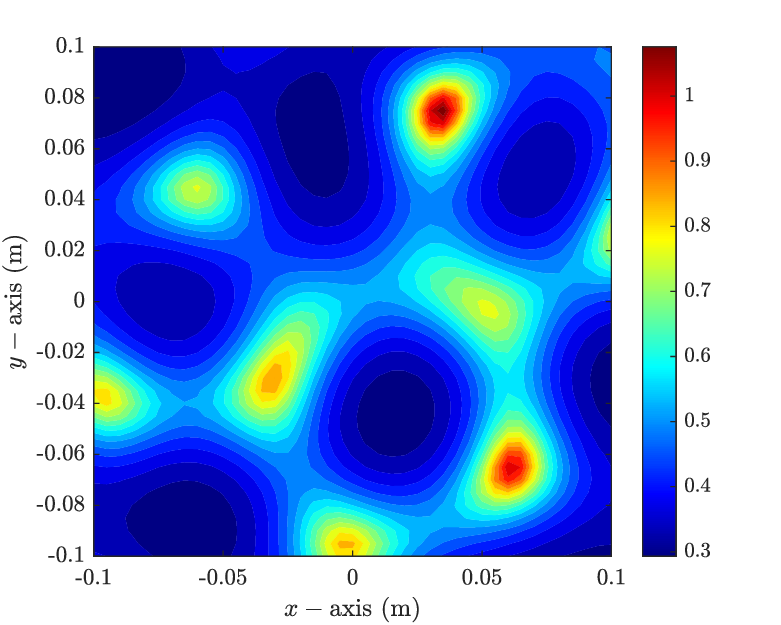}}\hfill
\subfigure[without diagonal elements and $N=12$]{\includegraphics[width=0.25\textwidth]{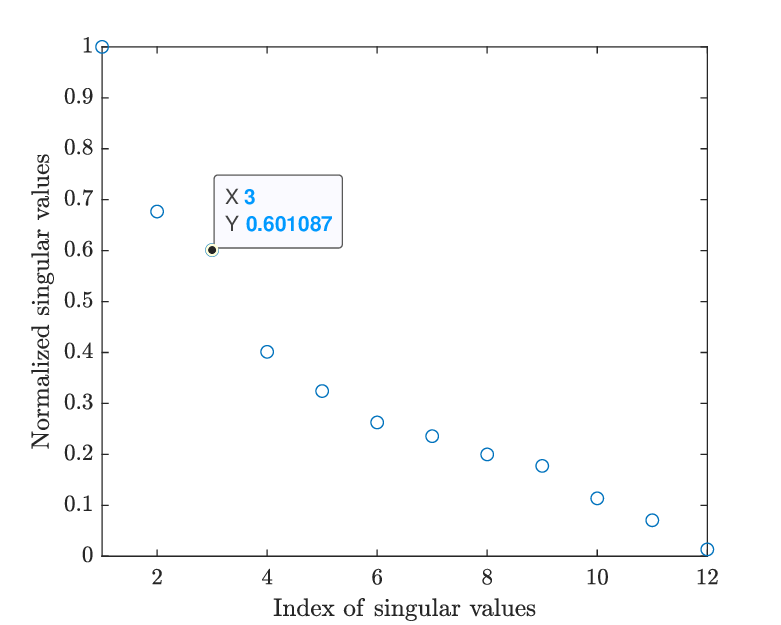}\hfill
\includegraphics[width=0.25\textwidth]{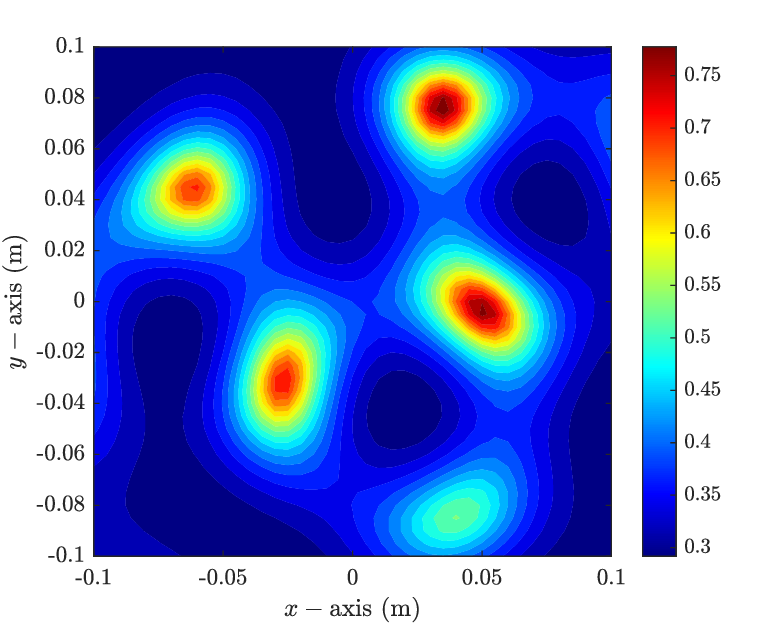}}\\
\subfigure[with diagonal elements and $N=36$]{\includegraphics[width=0.25\textwidth]{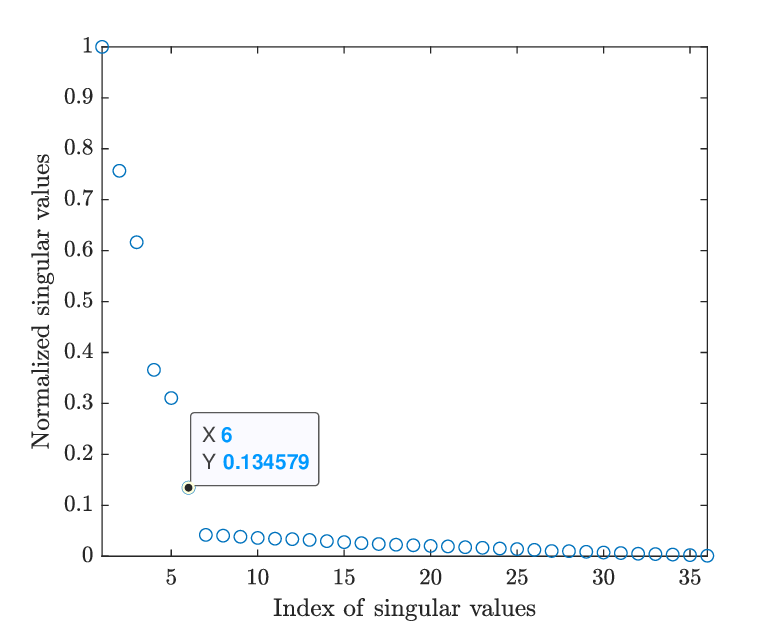}\hfill
\includegraphics[width=0.25\textwidth]{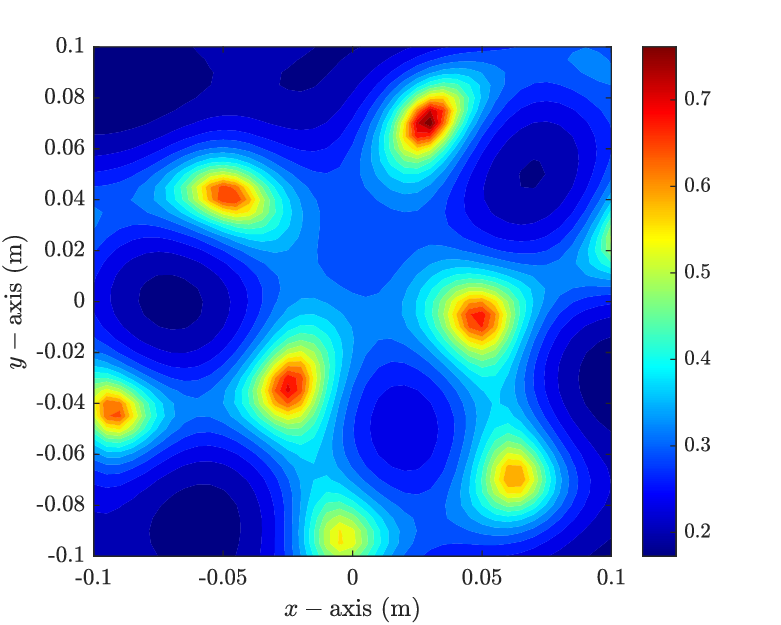}}\hfill
\subfigure[without diagonal elements and $N=36$]{\includegraphics[width=0.25\textwidth]{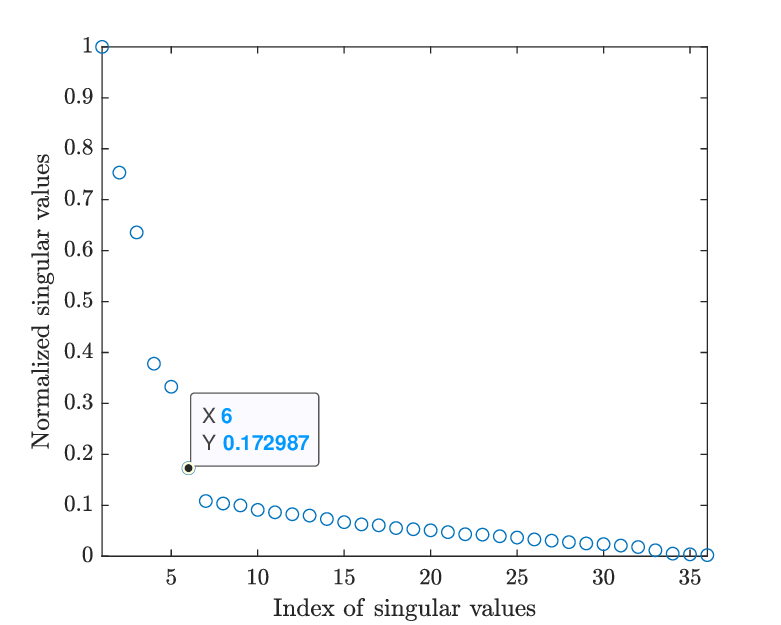}\hfill
\includegraphics[width=0.25\textwidth]{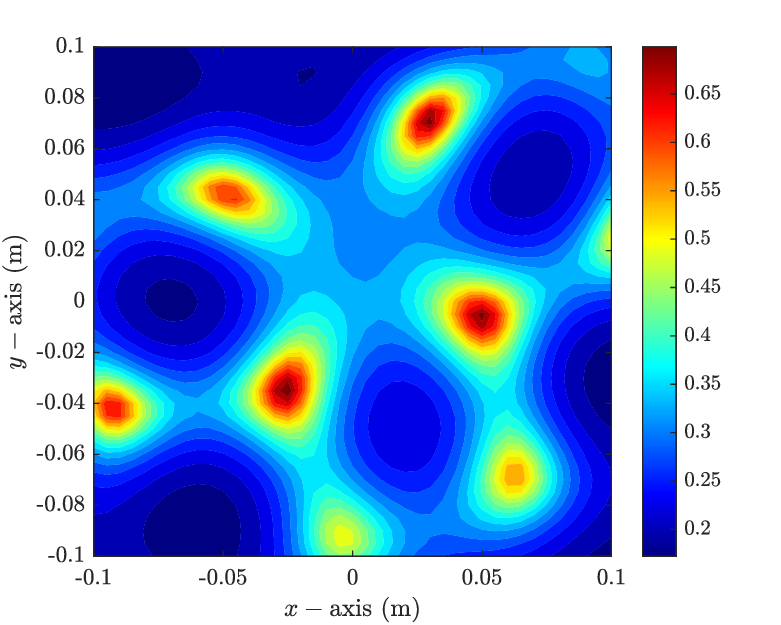}}
\caption{\label{Result-mu-1}(Example \ref{Example-mu}) Distribution of normalized singular values and maps of $\mathfrak{F}_{\tm}(\mx)$ at $f=\SI{2}{\giga\hertz}$.}
\end{center}
\end{figure}

\begin{figure}[h]
\begin{center}
\subfigure[with diagonal elements and $N=12$]{\includegraphics[width=0.25\textwidth]{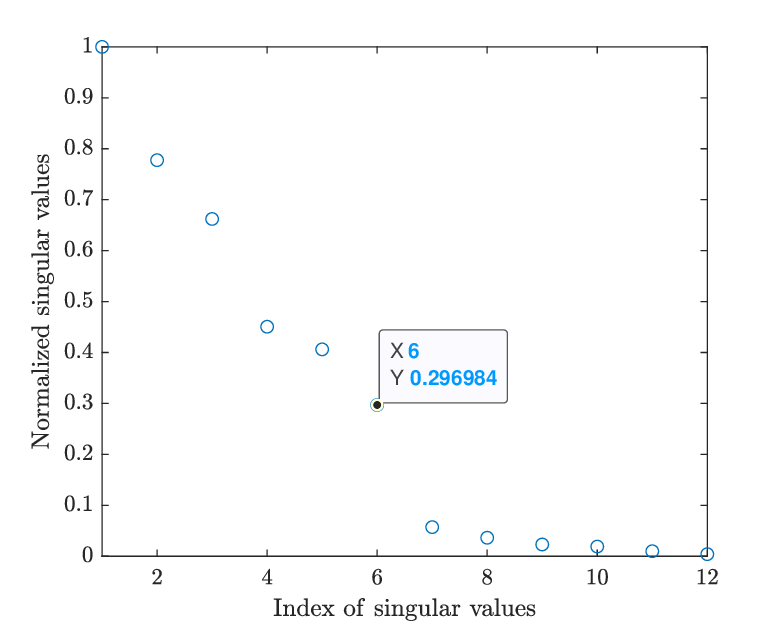}\hfill
\includegraphics[width=0.25\textwidth]{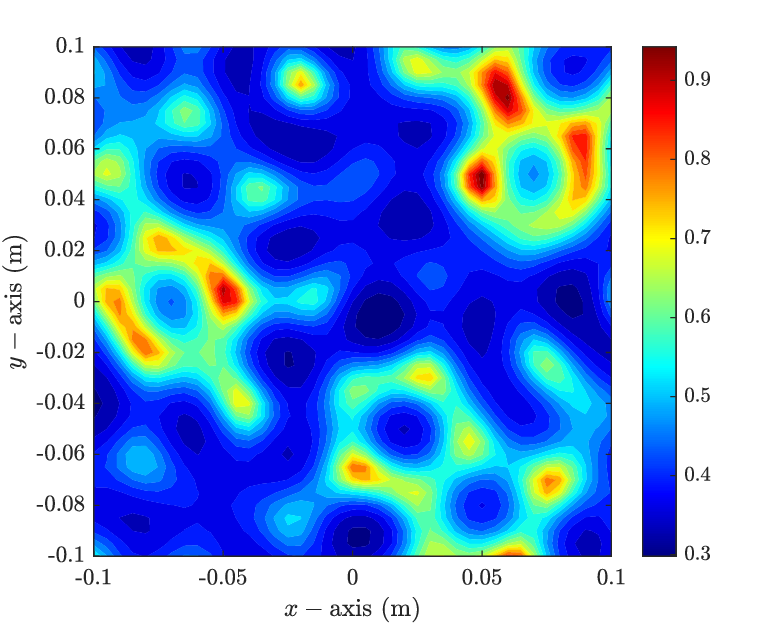}}\hfill
\subfigure[without diagonal elements and $N=12$]{\includegraphics[width=0.25\textwidth]{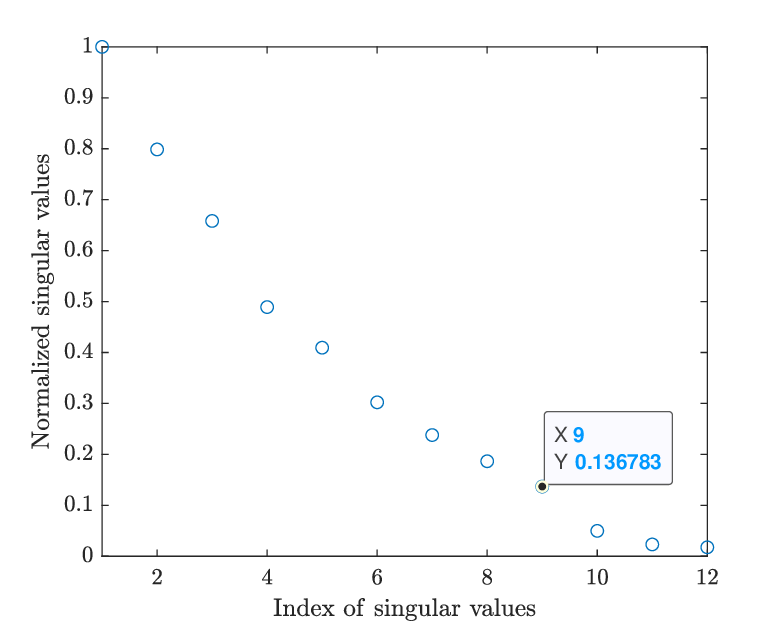}\hfill
\includegraphics[width=0.25\textwidth]{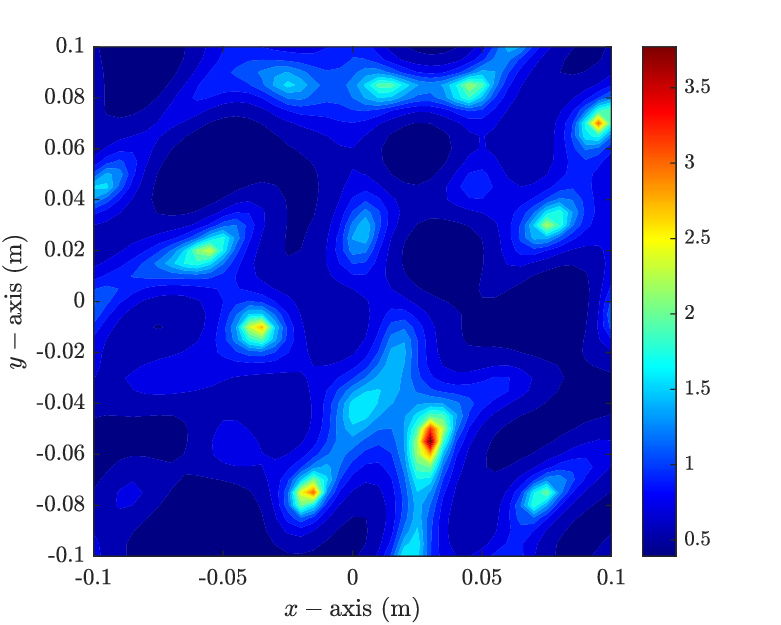}}\\
\subfigure[with diagonal elements and $N=36$]{\includegraphics[width=0.25\textwidth]{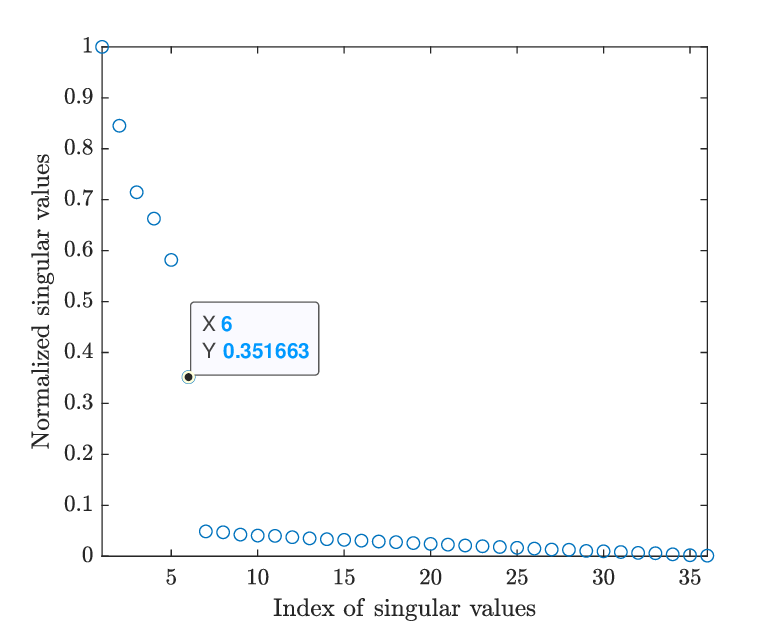}\hfill
\includegraphics[width=0.25\textwidth]{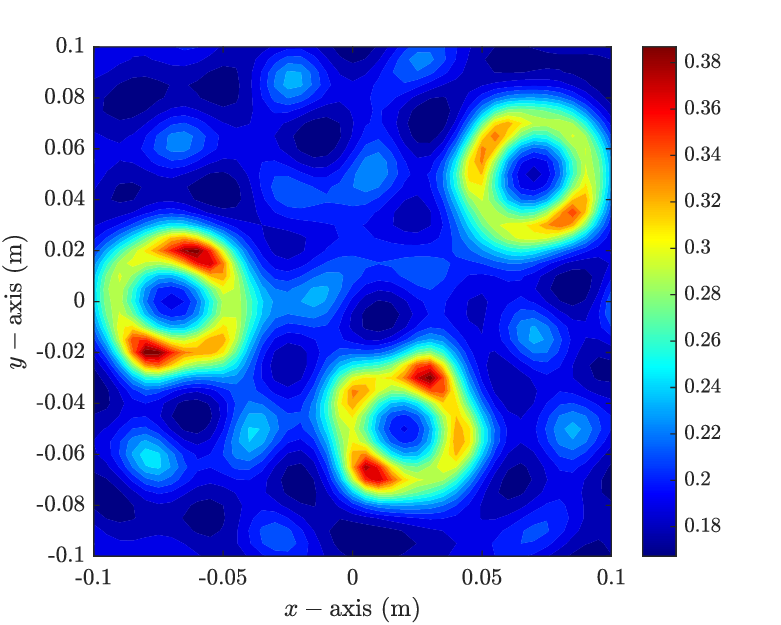}}\hfill
\subfigure[without diagonal elements and $N=36$]{\includegraphics[width=0.25\textwidth]{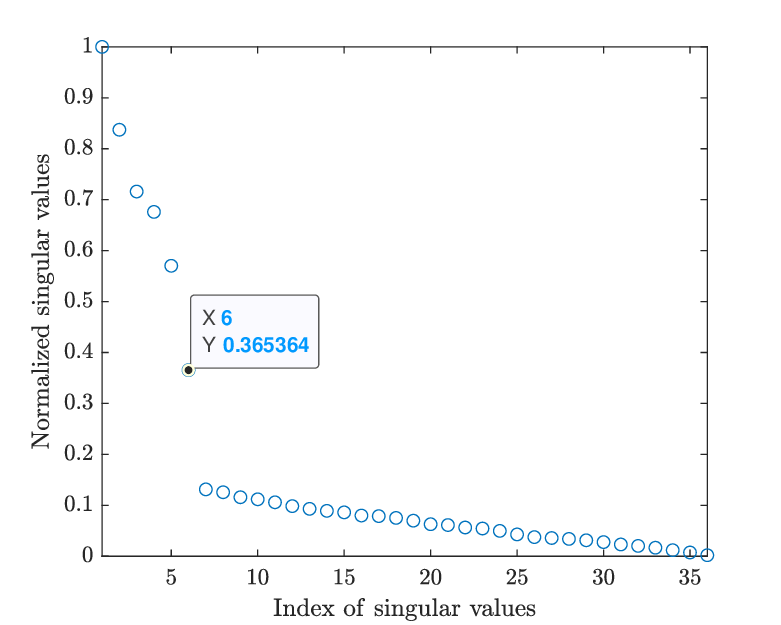}\hfill
\includegraphics[width=0.25\textwidth]{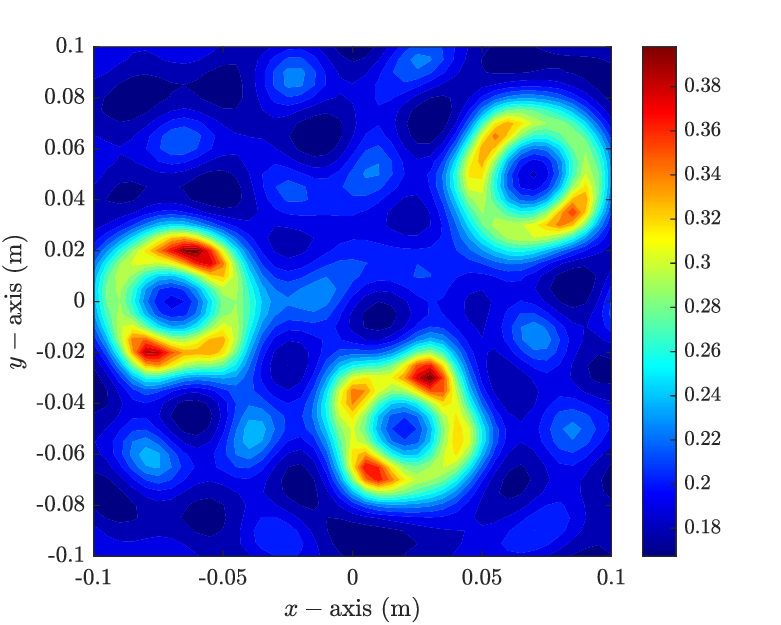}}
\caption{\label{Result-mu-2}(Example \ref{Example-mu}) Distribution of normalized singular values and maps of $\mathfrak{F}_{\tm}(\mx)$ at $f=\SI{4}{\giga\hertz}$.}
\end{center}
\end{figure}

\begin{ex}[Simulation results with experimental data]\label{Example-Fresnel}
Here, we let us consider the simulation results with experimental data \cite{BS}. Following to the simulation configuration in the presence of two dielectric objects (\texttt{twodielTM\underline{ }8f.exp}), the range of receivers is restricted from $\SI{60}{\degree}$ to $\SI{300}{\degree}$, with a step size of $\SI{5}{\degree}$ based on each direction of the transmitters. The transmitters are evenly distributed with step sizes of $\SI{10}{\degree}$ from $\SI{0}{\degree}$ to $\SI{350}{\degree}$. As a result, many elements (totally, $36 \times 23$ measurement data) including the diagonal of the matrix $\mathbb{K}\in\mathbb{C}^{36\times72}$ cannot be measured, We refer to Figure \ref{Matrix}.

Although, the range of the $\mathbb{K}$ is unknown, we consider the application of the MUSIC. To this end, let us perform the SVD
\[\mathbb{K}=\sum_{n=1}^{N}\sigma_n\mU_n\mV_n^*\approx\sum_{n=1}^{N'}\sigma_n\mU_n\mV_n^*.\]
Since $\mathbb{K}$ is non-symmetric, we cannot use the test vector $\mf_{\eps}(\mx)$ of \eqref{testvector-TM} directly. Instead, based on the recent studies \cite{P-SUB16,P-MUSIC7}, we generate projection operators onto the noise subspaces
\[\mathbb{P}_{\noise}=\mathbb{I}_{N'}-\sum_{n=1}^{N'}\mU_n\mU_n^*\quad\text{and}\quad\mathbb{Q}_{\noise}=\mathbb{I}_{N'}-\sum_{n=1}^{N'}\mV_n\mV_n^*,\]
and unit test vectors
\[\mf(\mx)=\frac{1}{\sqrt{36}}\bigg(e^{i\kb\vt_1\cdot\mx},e^{i\kb\vt_2\cdot\mx},\ldots,e^{i\kb\vt_{36}\cdot\mx}\bigg)^T\quad\text{and}\quad
\mg(\mx)=\frac{1}{\sqrt{72}}\bigg(e^{-i\kb\vv_1\cdot\mx},e^{-i\kb\vv_2\cdot\mx},\ldots,e^{-i\kb\vv_{72}\cdot\mx}\bigg)^T.\]
Then, the imaging function can be introduced as
\[\mathfrak{F}_{\dm}(\mz)=\frac{1}{2}\left(\frac{1}{|\mathbb{P}_{\noise}(\mf(\mx))|}+\frac{1}{|\mathbb{Q}_{\noise}(\mg(\mx))|}\right).\]

Based on the imaging results in Figure \ref{Example-Fresnel}, although exact shape of objects cannot be retrieved, the existence and outline shape of objects can be retrieved at $f=\SI{4}{\giga\hertz}$ and $\SI{6}{\giga\hertz}$. However, if one applies low frequency, it will be impossible to recognize the existence of objects (at $f=\SI{1}{\giga\hertz}$) or very difficult to retrieve the outline shape of objects (at $f=\SI{2}{\giga\hertz}$).
\end{ex}

\begin{figure}[h]
\centering
\subfigure[$f=\SI{1}{\giga\hertz}$]{\includegraphics[width=0.25\textwidth]{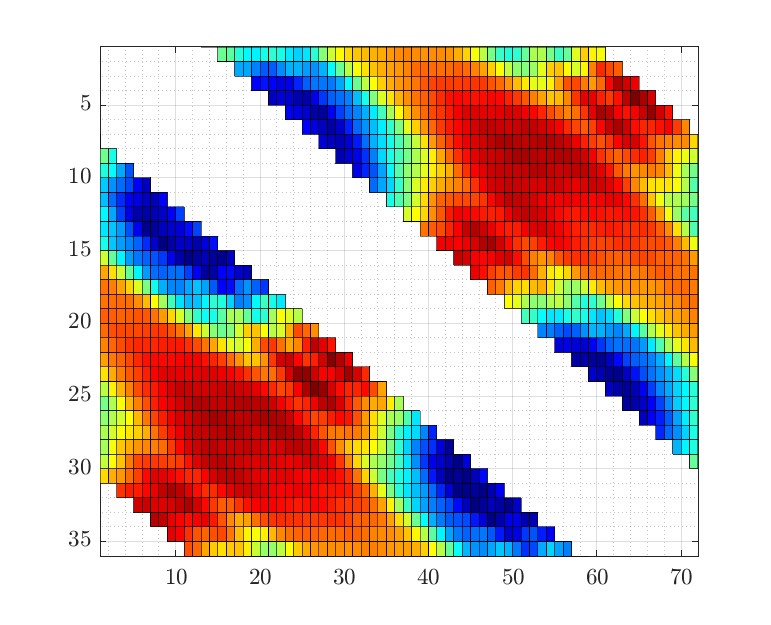}}\hfill
\subfigure[$f=\SI{2}{\giga\hertz}$]{\includegraphics[width=0.25\textwidth]{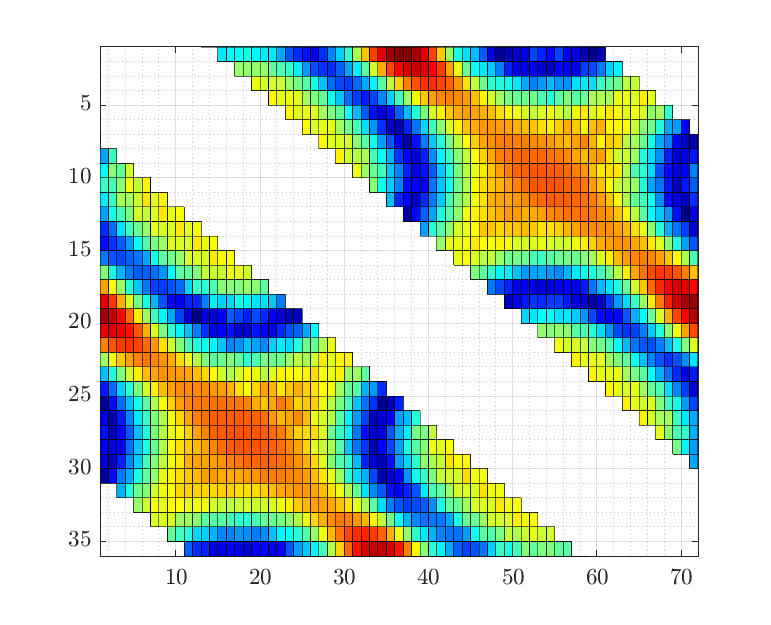}}\hfill
\subfigure[$f=\SI{4}{\giga\hertz}$]{\includegraphics[width=0.25\textwidth]{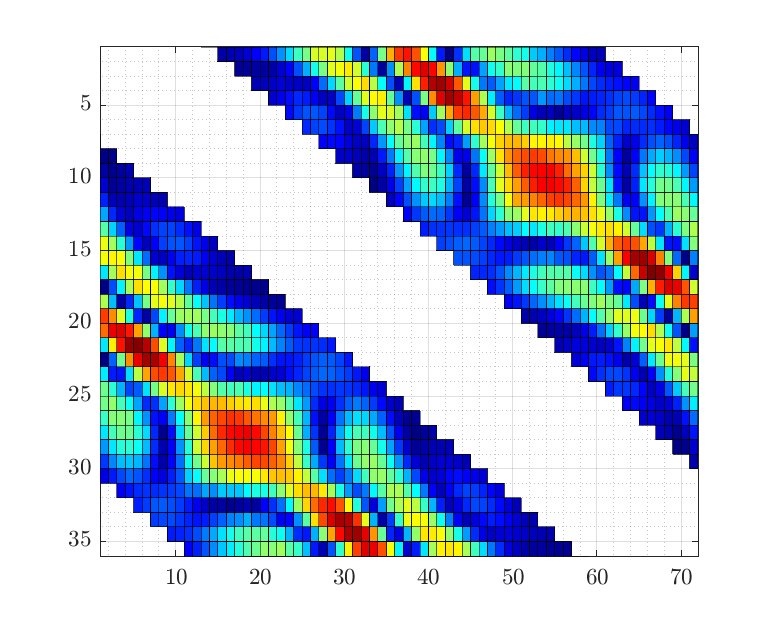}}\hfill
\subfigure[$f=\SI{6}{\giga\hertz}$]{\includegraphics[width=0.25\textwidth]{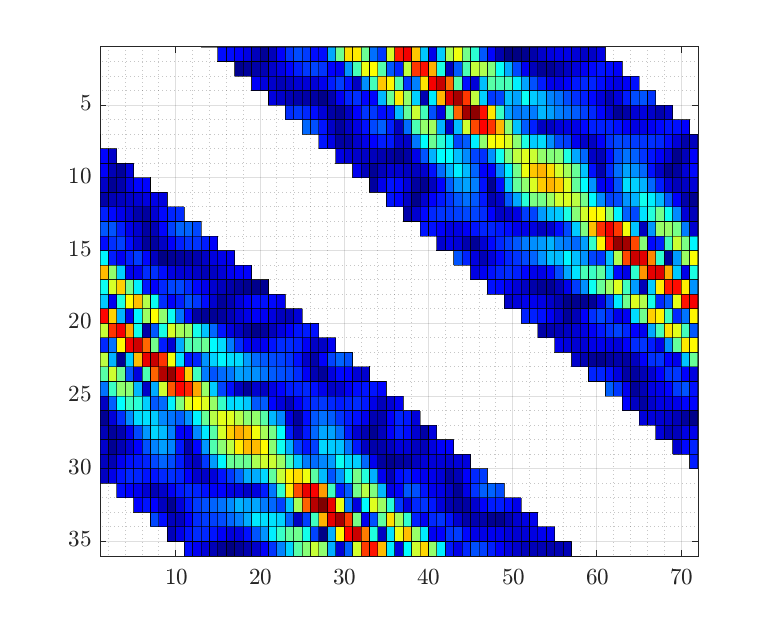}}
\caption{\label{Matrix}(Example \ref{Example-Fresnel}) Visualization of the absolute value of MSR matrix.} 
\end{figure}

\begin{figure}[h]
\begin{center}
\subfigure[$f=\SI{1}{\giga\hertz}$]{\includegraphics[width=0.25\textwidth]{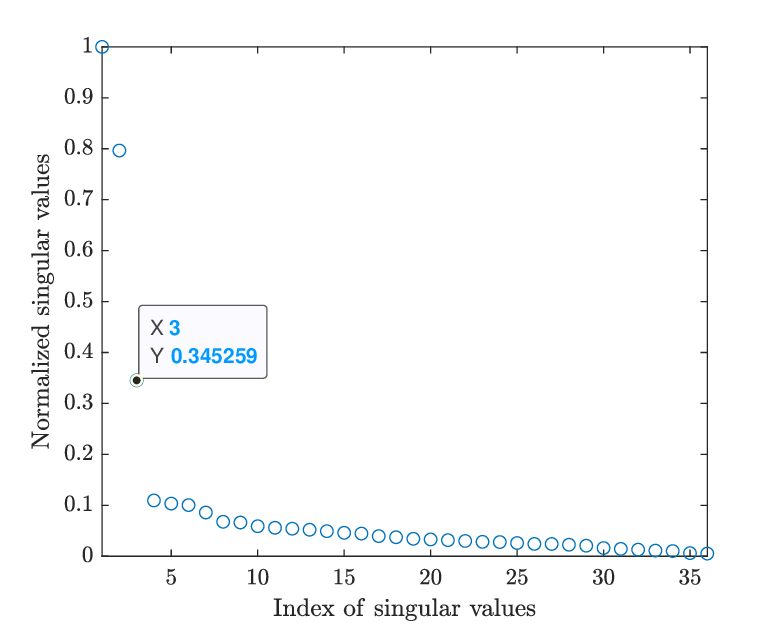}\hfill
\includegraphics[width=0.25\textwidth]{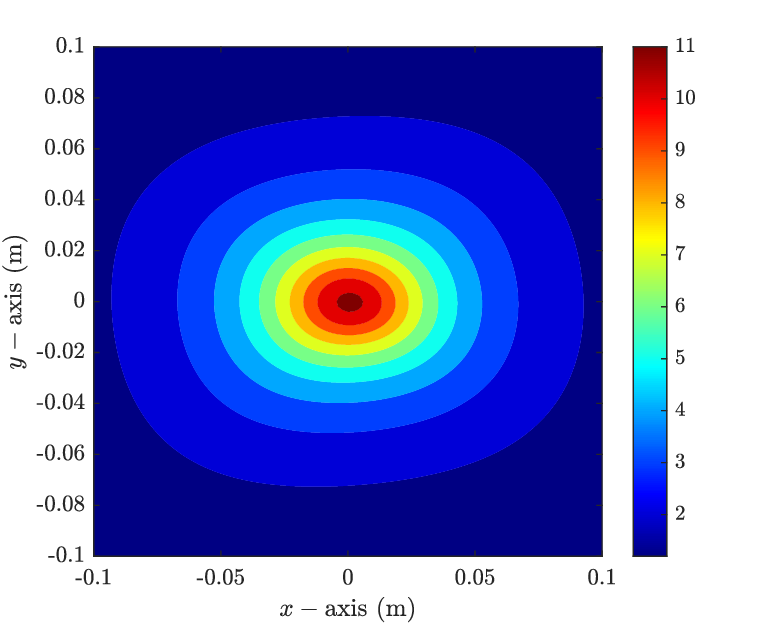}}\hfill
\subfigure[$f=\SI{2}{\giga\hertz}$]{\includegraphics[width=0.25\textwidth]{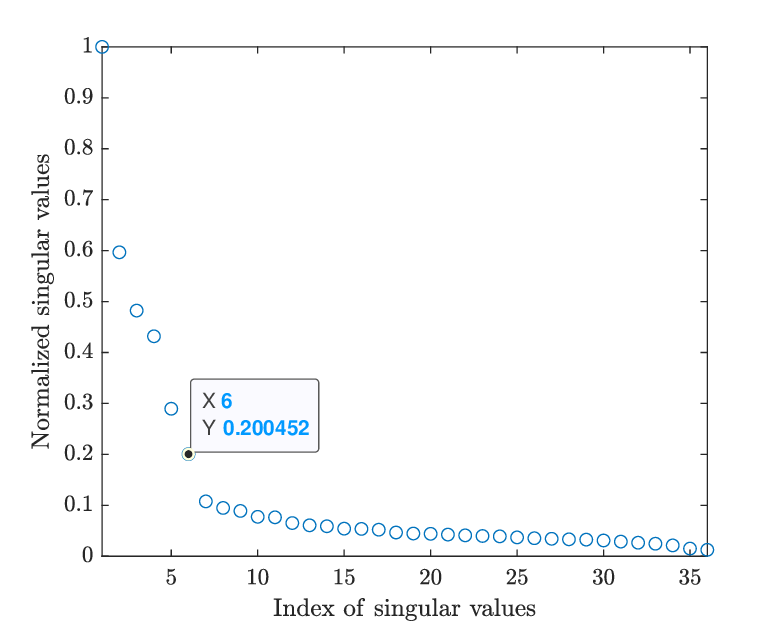}\hfill
\includegraphics[width=0.25\textwidth]{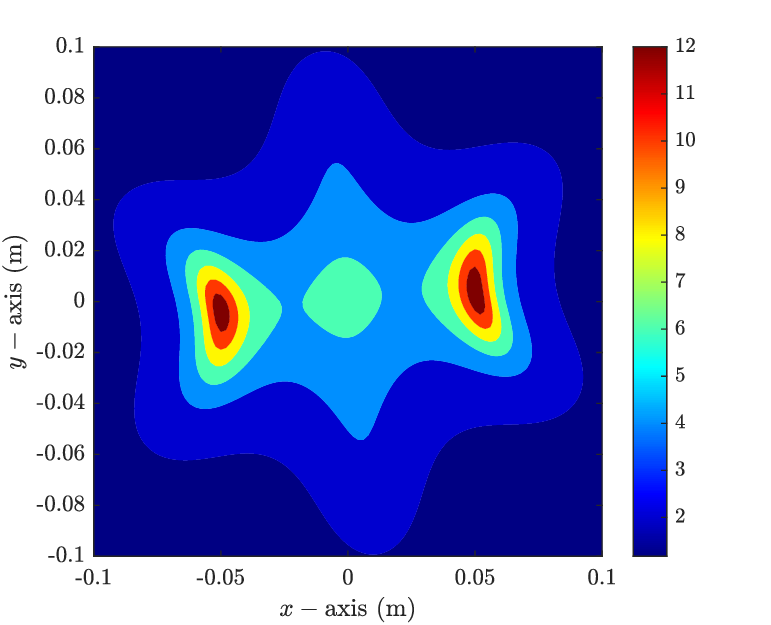}}\\
\subfigure[$f=\SI{4}{\giga\hertz}$]{\includegraphics[width=0.25\textwidth]{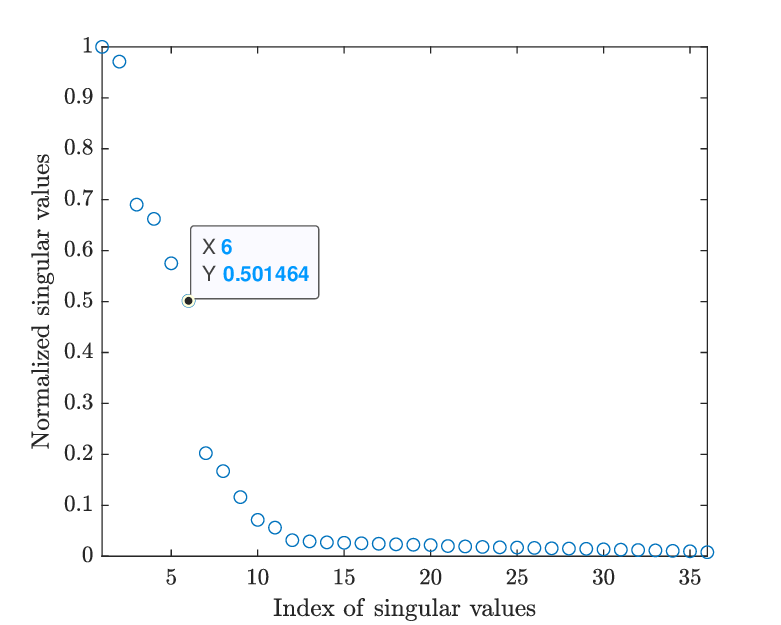}\hfill
\includegraphics[width=0.25\textwidth]{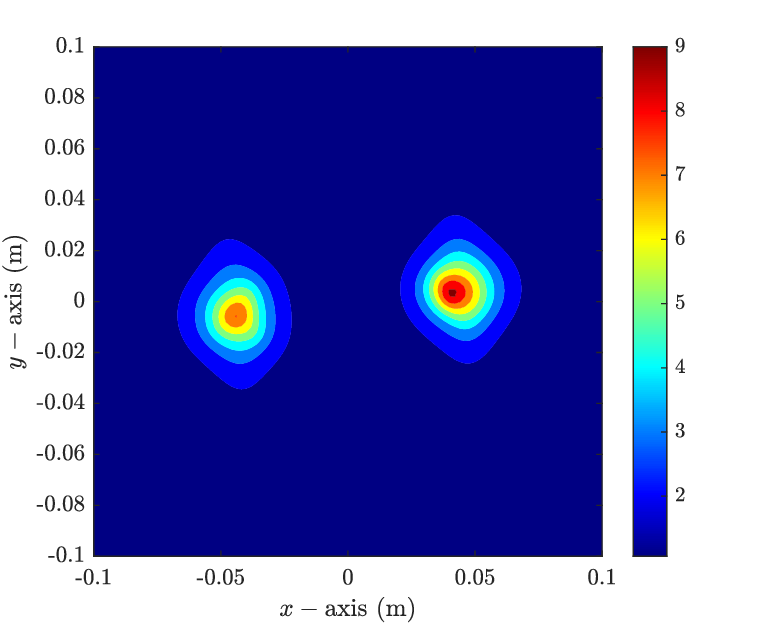}}\hfill
\subfigure[$f=\SI{6}{\giga\hertz}$]{\includegraphics[width=0.25\textwidth]{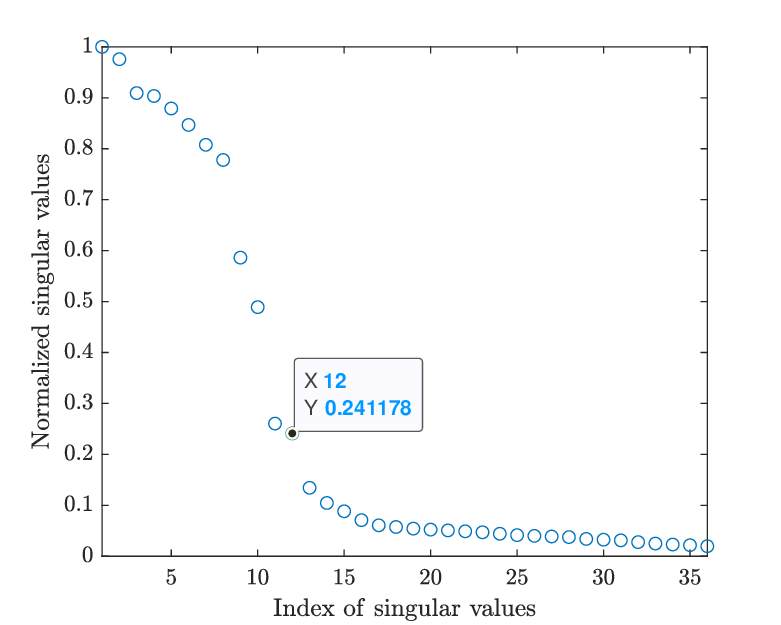}\hfill
\includegraphics[width=0.25\textwidth]{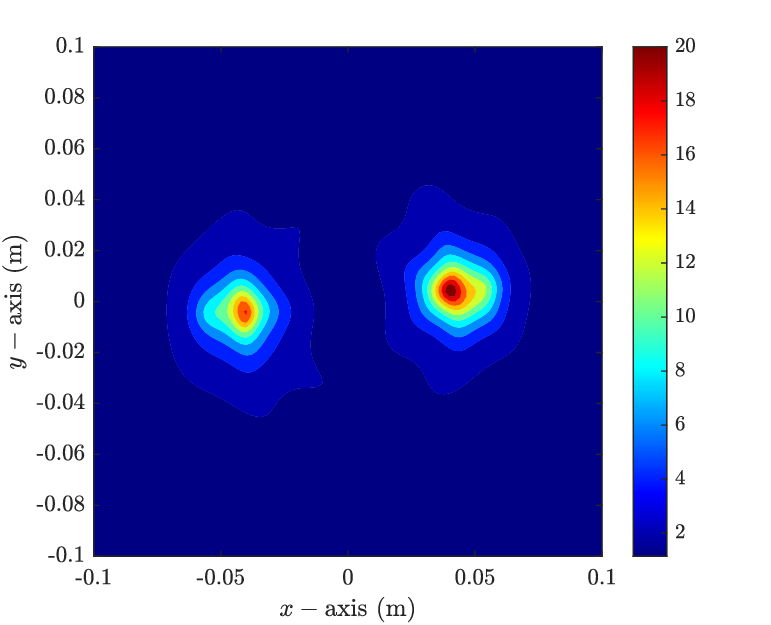}}
\caption{\label{Result-Fresnel}(Example \ref{Example-Fresnel}) Distribution of normalized singular values and maps of $\mathfrak{F}_{\dm}(\mx)$.}
\end{center}
\end{figure}

\section{Conclusion}\label{sec:7}
In this study, we considered the MUSIC algorithm for localizing two-dimensional small inhomogeneity modeled via TM and TE polarization when the diagonal elements of MSR matrix cannot be determined. We investigated a mathematical structure of the imaging functions by establishing a relationship with the Bessel function of order $0$ (TM polarization) and $1$ (TE polarization). Based on the investigated structures, we confirmed that MUSIC can be applied to retrieve location of small inhomogeneity without the diagonal elements of the MSR matrix in both TM and TE polarizations.

Unfortunately, exact expression of $C_\mu$ in Theorem \ref{Theorem-TE} is still unknown. Derivation of exact structure of the imaging function in TE polarization will be an interesting research subject. In this study, the structures were derived in the presence of single inhomogeneity but MUSIC can be applied to the identification of multiple, small inhomogeneities on the basis of simulation results. Extension to the multiple, small inhomogeneities will be the forthcoming work. Finally, extension to the three-dimensional inverse scattering problem will be an interesting research topic.

\section*{Acknowledgments}
This work was supported by the National Research Foundation of Korea (NRF) grant funded by the Korea government (MSIT) (NRF-2020R1A2C1A01005221).

\end{document}